\newcommand{\citet}[1]{\cite{#1}}
\newcommandx{\unsure}[2][1=]{\todo[linecolor=green,backgroundcolor=green!25,bordercolor=green,#1]{\normalsize #2}}
\newcommandx{\improvement}[2][1=]{\todo[inline,linecolor=blue,backgroundcolor=blue!05,bordercolor=blue,#1]{\normalsize #2}}
\newcommandx{\info}[2][1=]{\todo[linecolor=yellow,backgroundcolor=yellow!25,bordercolor=yellow,#1]{#2}}
\newcommandx{\floatmodel}[2][1=]{\todo[inline,linecolor=red,backgroundcolor=yellow!25,bordercolor=yellow,#1]{#2}}
\newcommandx{\thiswillnotshow}[2][1=]{\todo[disable,#1]{#2}}
\newcommandx{\karol}[2][1=]{\todo[inline,linecolor=blue,backgroundcolor=blue!25,bordercolor=blue,caption={\normalsize \textbf{Karol}},#1]{\normalsize #2}}
\newcommandx{\jana}[2][1=]{\todo[inline,linecolor=red,backgroundcolor=red!25,bordercolor=red,caption={\normalsize \textbf{Jana}},#1]{\normalsize #2}}
\newcommandx{\michal}[2][1=]{\todo[inline,linecolor=gray,backgroundcolor=red!25,bordercolor=red,caption={\normalsize \textbf{Micha\l{}}},#1]{\normalsize #2}}
\newcommandx{\marco}[2][1=]{\todo[inline,linecolor=green,backgroundcolor=green!25,bordercolor=green,caption={\normalsize \textbf{Marco}},#1]{\normalsize #2}}
\newtheorem{theorem}{Theorem}%[section]
\newtheorem{lemma}{Lemma}
\newtheorem{conjecture}{Conjecture}%[section]
\newtheorem{corollary}{Corollary}
\newcommand{\eps}{\varepsilon}
\newcommand{\Oh}{\mathcal{O}}
\newcommand{\nat}{\mathbb{N}}
\newcommand{\Ss}{S}
\newcommand{\Mm}{\mathcal{M}}
\newcommand{\Gg}{\mathcal{G}}
\newcommand{\Bb}{\mathcal{B}}
\newcommand{\Ii}{\mathcal{I}}
\newcommand{\graphs}{\Gg_\mathrm{seg}}
\renewcommand{\leq}{\leqslant}
\renewcommand{\geq}{\geqslant}
\renewcommand{\le}{\leqslant}
\renewcommand{\ge}{\geqslant}
\title{Independence number of intersection graphs of axis-parallel segments}
\date{}
\author{
	Marco Caoduro\footnote{Laboratoire G-SCOP, Univ. Grenoble Alpes, France, \textsf{marco.caoduro@grenoble-inp.fr}.}
	\and
    Jana Cslovjecsek\footnote{EPFL, Switzerland, \textsf{jana.cslovjecsek@epfl.ch}.}
    \and
    Micha\l{} Pilipczuk\footnote{Institute of Informatics, University of
    Warsaw, Poland, \textsf{michal.pilipczuk@mimuw.edu.pl}. This work is a part of
    the project BOBR that has received funding from the European
    Research Council (ERC) under the European Union's Horizon 2020 research and
    innovation programme (grant agreement No 948057).}
    \and
    Karol W\k{e}grzycki\footnote{Saarland University and Max Planck Institute for Informatics,
        Saarbr\"ucken, Germany, \textsf{wegrzycki@cs.uni-saarland.de}. 
    This work is part of the project TIPEA that has
    received funding from the European Research Council (ERC) under the European Union's Horizon
    2020 research and innovation programme (grant agreement No 850979).}
}
\begin{document}

\maketitle

\thispagestyle{empty}
% The usual strategy to lower bound independence number $\alpha$ of $n$-vertex
% graph is to use an inequality $\alpha(G) \ge n/\chi(G)$, where $\chi(G)$
% is the chromatic number. We demonstrate that significantly better lowerbounds on
% $\alpha(G)$ can be obtained even for $\chi(G) = 4$.

\begin{abstract}
We prove that for any triangle-free intersection
graph of $n$ axis-parallel segments in the plane, the independence number $\alpha$ of this graph is at least $\alpha \ge n/4 +
\Omega(\sqrt{n})$. We complement this with a construction of a graph in this class
satisfying $\alpha \le n/4 + c \sqrt{n}$ for an absolute constant $c$, which demonstrates
the optimality of our~result.
\end{abstract}

\begin{picture}(0,0)
\put(462,-370)
{\hbox{\includegraphics[width=40px]{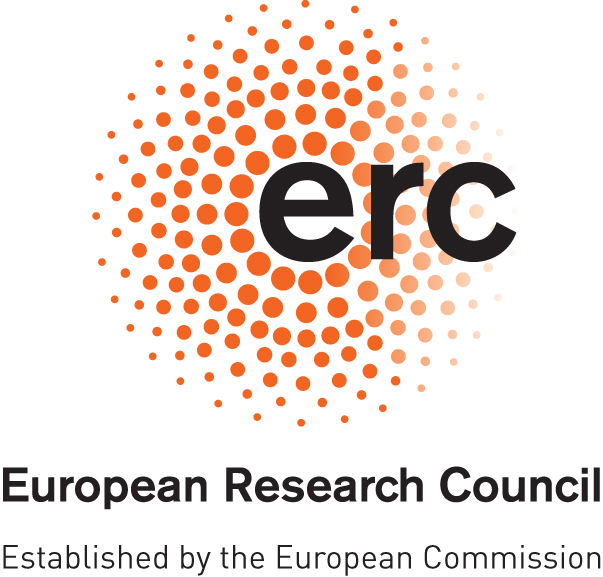}}}
\put(452,-430)
{\hbox{\includegraphics[width=60px]{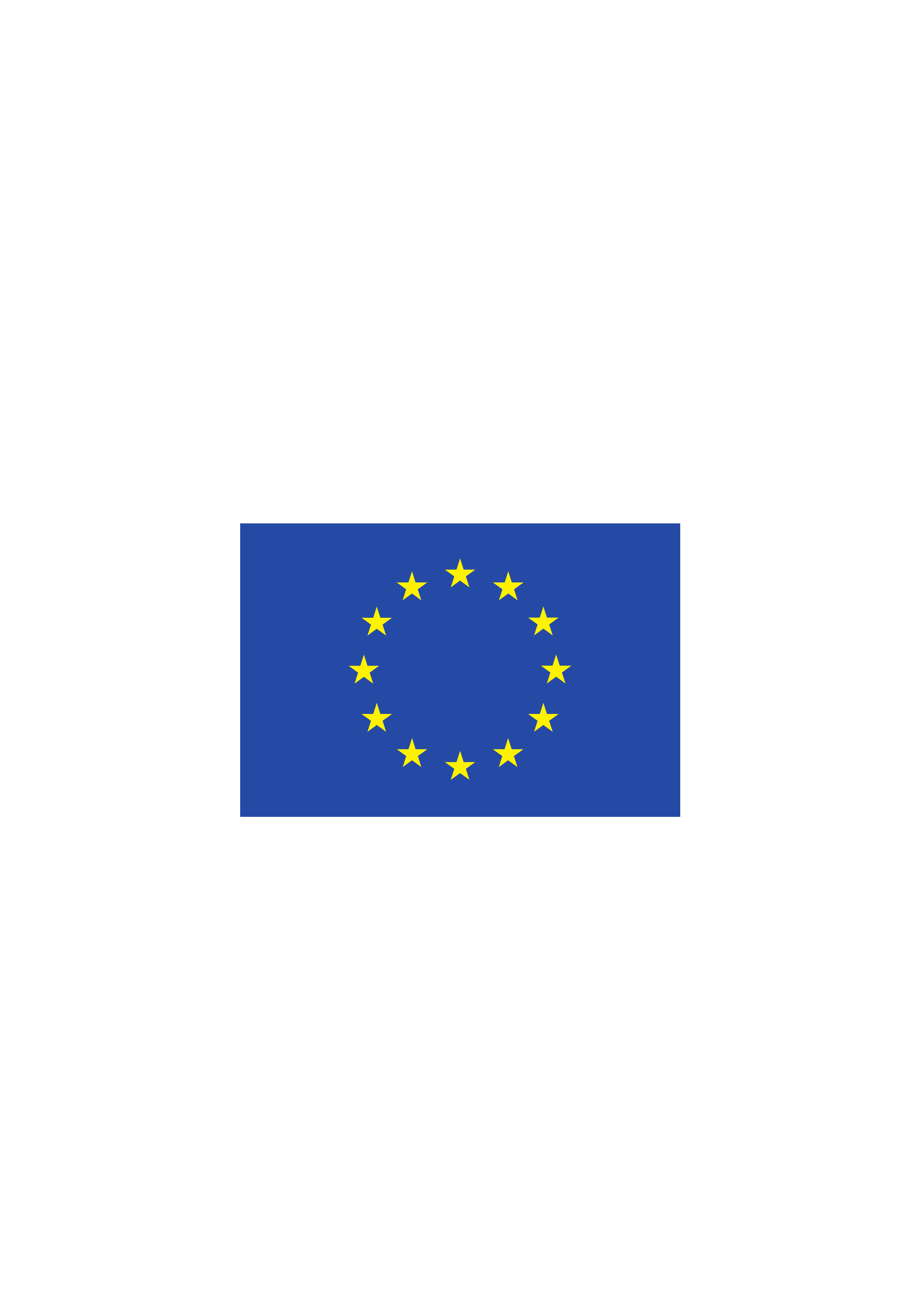}}}
\end{picture}

\clearpage
\setcounter{page}{1}

\section{Introduction}

For a graph $G$, the {\em{independence number}} $\alpha(G)$ is the maximum size of an 
independent set in $G$. 
%The computation task of determining this parameter is a fundamental problem~\cite{1972_Karp, 1990_Garey}. 
Both lower and upper bounds on the independence number were intensively studied in various graph classes.
%, including interval graphs~\cite{1982_Gupta}, planar graphs \cite{2008_Alekseev}, and triangle-free graphs~\cite{1991_Shearer}
In this paper, we study the independence number in classes of geometric intersection graphs.

For a family of geometric objects $\Ss$ in the plane, the intersection graph $G(\Ss)$ has vertex set $\Ss$
and two objects are considered adjacent if they intersect. Naturally,
the independence number $\alpha(\Ss)$ is defined as the maximum size of a subset of objects that are pairwise disjoint. 

A simple lower bound on the independence number can be often obtained by studying the \emph{chromatic number} $\chi(G)$ --- the minimum number of colors needed to properly color the vertices
of $G$ --- and using the obvious inequality $\alpha(G) \ge n/\chi(G)$.
This strategy does not alway provide optimum lower bounds, which will be also the case in this work.

Specifically, we
consider intersection graphs of axis-parallel segments in the plane
where no three segments intersect at a single point. For simplicity, we will
denote this class of graphs by $\graphs$. Observe that we have $\chi(G)\le 4$ for every $G \in \graphs$, because we can use two colors to properly color the horizontal segments, and another two for the vertical segments. Hence, if $G\in \graphs$ has $n$ vertices, then $\alpha(G)\ge n/4$. 

Our two main results, presented below, prove that this simple lower bound can be always improved by an additive term of the order $\sqrt{n}$, but no further improvement is possible.

\begin{theorem}\label{thm:lower bound}
    Let $G$ be a graph in $\graphs$ with $n$ vertices. Then the independence
    number of $G$ is at least
    \begin{displaymath}
        \alpha(G) \ge  \frac{n}{4} + c_1\sqrt{n},
    \end{displaymath}
    for some absolute constant $c_1$.
\end{theorem}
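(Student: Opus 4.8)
The plan is to exploit the bipartite-like skeleton behind the bound $\chi(G)\le 4$ and then to show that the extremal configuration forcing $\alpha=n/4$ cannot occur, the slack being of order $\sqrt n$. First I would record the structural decomposition implicit in the coloring argument. The horizontal segments induce a triangle-free interval graph: triangle-free because the no-three-concurrent condition forbids three pairwise overlapping collinear segments, and an interval graph, hence chordal; a chordal triangle-free graph has no cycles at all, so the horizontal segments induce a forest $F_H$, and likewise the vertical segments induce a forest $F_V$. All remaining edges form the bipartite \emph{crossing graph} $B$ between $H$ and $V$. Since an independent set of $F_H$ is automatically independent in $G$, we obtain the baseline bounds $\alpha(G)\ge\alpha(F_H)\ge|H|/2$ and $\alpha(G)\ge\alpha(F_V)\ge|V|/2$; consequently, if $|H|$ and $|V|$ differ by more than $\Theta(\sqrt n)$, or if some vertex has degree exceeding $n/4+c\sqrt n$ (its neighbourhood is independent by triangle-freeness), we are immediately done. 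I may therefore assume $|H|,|V|=n/2\pm O(\sqrt n)$ and bounded maximum degree.

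Next I would set up a family of candidate independent sets. Fix proper $2$-colourings $H=H_1\cup H_2$ and $V=V_1\cup V_2$ of the two forests. For each pair $(i,j)$ the induced graph $G[H_i\cup V_j]$ contains only crossing edges, so it is bipartite and König's theorem gives $\alpha(G[H_i\cup V_j])=|H_i|+|V_j|-\nu_{ij}$, where $\nu_{ij}$ is the maximum matching in the crossing graph between $H_i$ and $V_j$. Hence
\[
\alpha(G)\ \ge\ \max\Bigl(\max_i|H_i|,\ \max_j|V_j|,\ \max_{i,j}\bigl(|H_i|+|V_j|-\nu_{ij}\bigr)\Bigr).
\]
The approach is then a stability argument. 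Assuming for contradiction that $\alpha(G)\le n/4+t$ with $t=o(\sqrt n)$, all four colour classes must have size $n/4\pm o(\sqrt n)$, and each $\nu_{ij}$ must lie within $o(\sqrt n)$ of its maximum possible value $n/4$. Thus every one of the four crossing graphs has a near-perfect matching, which forces almost every segment to cross at least two transversal segments; in particular $B$ has essentially minimum degree two and at least $n-o(n)$ edges, so it is very far from a forest.

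The heart of the matter — and the step I expect to be the main obstacle — is to convert this forced density into an independence surplus of order $\sqrt n$, and to do so geometrically rather than through pure graph theory (triangle-free graphs of average degree $\Theta(n)$ may have independence number only $O(\log n)$, so the segment structure is essential). Here I would use that $B$ is a grid intersection graph: its edges correspond to crossing points, all distinct because no three segments are concurrent, so the segments together with these crossing points form a planar arrangement. Euler's formula turns the excess of edges over vertices into a linear number of bounded cells, each a rectilinear region whose boundary alternates horizontal and vertical pieces. I would then design a local exchange, or a charging scheme over the cells together with the leaves and odd components of $F_H$ and $F_V$, that augments any near-balanced candidate set, the total gain being calibrated against the $\sqrt n\times\sqrt n$ grid underlying the matching upper bound. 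Producing $\Omega(\sqrt n)$ simultaneously independent improvements — rather than $O(1)$, or improvements that interfere with one another — is the delicate point, and I expect it to require choosing the two forest $2$-colourings and the cell-based swaps coherently, for instance along a sweepline, so that the surplus accumulates additively.

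Finally I would assemble the cases: the unbalanced regime and the large-maximum-degree regime are disposed of directly by the baseline bounds, while the remaining balanced, bounded-degree, matching-saturated regime is handled by the geometric surplus argument above. Together these yield $\alpha(G)\ge n/4+c_1\sqrt n$ for a suitable absolute constant $c_1$, with the $\sqrt n$ scaling matching, up to the constant, the extremal construction that certifies optimality.
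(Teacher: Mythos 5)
Your skeleton is sound as far as it goes: the decomposition into collinear paths plus a bipartite crossing graph, the baseline bounds, and K\H{o}nig's theorem applied to pairs of colour classes are all correct, and your per-component choice of the two $2$-colourings in effect recovers the paper's gain of $1/2$ per grid line carrying an odd number of segments (the paper's Lemma~\ref{lem:odd technique}), while your large-degree observation is a weaker cousin of its heavy-line bound (Lemma~\ref{lem:line technique}). But the heart of the proof --- extracting an $\Omega(\sqrt{n})$ surplus in the balanced, matching-saturated regime --- is precisely the step you leave open, and the mechanism you gesture at (Euler's formula producing linearly many bounded cells, then a charging or exchange scheme over those cells) cannot work as calibrated. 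The paper's own extremal construction $\Mm_k$ lies squarely in your hard regime: it is balanced and matching-saturated, and its arrangement has far more than linearly many crossings (each prolonged segment crosses segments of all boxes beyond it, giving $\Theta(n^2)$ crossing points and a comparable number of bounded cells), yet its independence surplus is only $\Theta(\sqrt{n})$. So no scheme whose gain is proportional to the number of cells, edges, or any other linear-or-larger structure can be correct; the $\sqrt{n}$ gain must come from a global substructure of size only $\Theta(\sqrt{n})$, and identifying it is the missing idea, not a technicality.

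The paper's mechanism, which your proposal does not contain, is an Erd\H{o}s--Szekeres argument on the \emph{meeting points} of collinear segments. After normalizing the representation so that intersecting parallel segments meet at single points, call a meeting point a \emph{candidate point} if it splits its line into two parts of odd cardinality; a line with an even number of segments carries exactly half that many candidate points, so there are $s_{\mathrm{even}}/2$ in total. Erd\H{o}s--Szekeres yields a monotone chain of $\sqrt{s_{\mathrm{even}}/2}$ of them, which is completed to a monotone polyline \emph{cut}; selecting alternate segments on each side of the cut, starting from the outermost, produces two independent sets whose union contains half of all segments \emph{plus one full segment per cutting point}, at a cost of at most $1/2$ per odd line. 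This gives $\alpha \ge n/4 + \sqrt{2 s_{\mathrm{even}}}/4 - \ell_{\mathrm{odd}}/4$ (Lemma~\ref{lem:even technique}), and a three-way trade-off with the odd-line and heavy-line bounds yields the theorem with $c_1 = 1/(4\sqrt{3})$. Note that the heavy-line case is genuinely needed: when almost all segments sit on few odd lines, both your baseline bounds and the cut argument are useless, and only taking one whole heavy line together with all orthogonal segments (a bipartite configuration) produces the surplus. Without the monotone-chain cut, or an equivalent replacement, your proposal is a plausible programme but not a proof.
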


\begin{theorem}
	\label{thm:upper_bound}
    For any $n \in \nat$ there exists a graph $G$ in $\graphs$ on $n$ vertices with independence number
    \begin{displaymath}
        \alpha(G) \le \frac{n}{4} + c_2 \sqrt{n},
    \end{displaymath}
    for some absolute constant $c_2$.
\end{theorem}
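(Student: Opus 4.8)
The plan is to exhibit an explicit family of axis-parallel segments whose intersection graph is triangle-free and whose maximum disjoint subfamily has size at most $\frac{n}{4} + \Oh(\sqrt{n})$. Set $m \approx \sqrt{n}/2$ and work on an $m \times m$ grid. I would place the horizontal segments in same-row overlapping \emph{pairs} (two horizontal segments sharing a row and overlapping only in a short middle interval) and, symmetrically, the vertical segments in same-column overlapping pairs; with $m$ rows and $m$ columns each carrying one pair we obtain $n = 4m^2$ segments. The two members of a pair are adjacent (they overlap), while the crossing pattern between horizontals and verticals is chosen so that no vertical meets both members of a horizontal pair and vice versa; after a generic perturbation of the overlap regions no three segments pass through a common point. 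This guarantees membership in $\graphs$, and, since every edge is either an intra-pair overlap or a transversal crossing between a horizontal and a vertical, that the graph is triangle-free.

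The reason such a construction can push $\alpha$ down to $\frac{n}{4}$ is the interaction between the intra-pair matchings and the crossing pattern. Each overlapping pair is an edge, so any independent set -- that is, any pairwise disjoint subfamily -- uses at most one segment from each of the $2m^2$ pairs, hence at most half of the horizontals and half of the verticals. The first step is therefore to reduce the theorem to bounding the number of pairwise disjoint segments, and to note that picking one member of every pair already realizes $\approx \frac{n}{4}$ segments; the entire content of the upper bound is that one cannot do essentially better by \emph{mixing} horizontals and verticals. I would design the transversal crossings to be dense and uniformly spread, so that any collection of $\approx m^2$ pairwise disjoint horizontals necessarily crosses all but $\Oh(m)$ of the verticals, and symmetrically; this is what forbids combining a near-half independent subset of each orientation into a much larger one.

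The main obstacle is the upper bound on $\alpha$ itself. The inequality $\alpha \ge n/\chi$ holds with equality only up to the very $\sqrt{n}$ slack we are trying to pin down, and -- crucially -- no coloring or linear-programming argument can even reach $\frac{n}{4}$ here: since the graph is triangle-free its cliques are single edges, so every (fractional) clique cover has weight at least $\frac{n}{2}$, and thus the standard bound $\alpha \le$ (fractional clique cover number) yields only $\alpha \le \frac{n}{2}$. A genuinely geometric, combinatorial argument is therefore required. I would prove the bound by a charging scheme: fix a maximum pairwise disjoint family $I$, tile the plane into the $\Oh(m^2)$ unit cells of the grid, and assign each segment of $I$ to a cell determined by its position (for instance the cell containing its overlap region or a designated endpoint). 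Using the overlap-pair structure together with the density of the crossing pattern, I would argue that, apart from cells near the border, each cell receives at most one segment of $I$ while four pairs are associated with it, giving $|I| \le \frac{n}{4} + (\text{boundary term})$, the boundary term being $\Oh(m) = \Oh(\sqrt{n})$ because only the $\Oh(m)$-length border of the grid fails to be fully saturated by crossings. The delicate points -- and where the construction parameters must be tuned -- are making the charging map injective in the interior and confining the deficiency to the boundary, so that the additive error is exactly of order $\sqrt{n}$ rather than larger, matching \Cref{thm:lower bound}.
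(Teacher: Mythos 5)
Your framing is right in two respects that match the paper: the graph is triangle-free, so every clique is an edge and any clique-cover or LP argument tops out at $\alpha \le n/2$ (the paper makes this explicit by computing $\theta(G_k)=\alpha^\star(G_k)=2k^2$ for its construction), so a genuinely geometric upper-bound argument is unavoidable; and the right skeleton is indeed ``two segments per line meeting at a point, prolonged to full width/height.'' But there are two genuine gaps. First, the construction is underspecified, and its guiding principle --- ``dense and uniformly spread'' crossings --- is the wrong one. In this family the crossing pattern is entirely determined by the positions of the meeting points, and what you actually need is that \emph{every} monotone (non-increasing or non-decreasing) sequence of meeting points has length $\Oh(\sqrt{n})$, i.e., an extremal configuration for the Erd\H{o}s--Szekeres theorem. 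This is not a tuning detail but the crux: if the meeting points form a monotone increasing staircase (a perfectly ``uniform'' layout), then all left members of the horizontal pairs together with all bottom members of the vertical pairs are pairwise disjoint, so $\alpha \ge n/2$ and the construction fails completely; more generally, the paper's \cref{lem:even technique} converts any monotone chain of $C$ meeting points into an independent set of size at least $n/4+\Omega(C)$, so generic or random placements (with monotone chains of length $\Theta(\sqrt{n})$, but no control of the constant) cannot simply be waved through. The paper's $\Mm_k$ is exactly the Erd\H{o}s--Szekeres-tight two-level layout: $k$ blocks ($k$-boxes) arranged along an increasing diagonal, each block carrying its $2k$ meeting points on a decreasing diagonal. (There is also an arithmetic slip: $m$ rows and $m$ columns each carrying one pair give $4m$ segments, not $4m^2$; you need $m^2$ lines of each orientation, organized in two scales as above.)

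Second, the upper bound itself --- the entire content of the theorem --- is only a plan in your write-up. The claim that ``apart from cells near the border, each cell receives at most one segment of $I$'' is precisely the hard statement; it is false for layouts your description permits (the staircase above packs two segments' worth into essentially every row and column), and you explicitly defer the two steps that would constitute the proof (injectivity of the charging in the interior, confinement of the deficiency to the boundary). For comparison, the paper proves the bound by a box-by-box count rather than a cell-charging: in any box that is \emph{boring} for the independent set $\Ii$, the sets $U\cup R$ and $D\cup L$ each partition into at most $k+1$ parts of pairwise intersecting segments (pairing $u_{i+1}$ with $r_i$, and $d_i$ with $l_i$), so the box contributes at most $k+1$ (\cref{lem:boring}); and at most two boxes can be \emph{interesting}, because after prolonging to the bounding square an up segment of one box crosses every left segment of any other box (\cref{lem:interesting}). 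Summing gives $\alpha \le 2\cdot 2k + (k-2)(k+1) = k^2+3k-2 = n/4+\Oh(\sqrt{n})$, matched by an explicit independent set. Until you fix an Erd\H{o}s--Szekeres-extremal layout and replace the charging sketch with an argument of this kind, the proposal does not yet prove the theorem.
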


\paragraph*{Consequences.}
The independence number is often studied in
relation to the clique covering number. A \emph{clique} in a graph is a set of pairwise adjacent vertices, and the \emph{clique
covering number} $\theta(G)$ of a graph $G$ is defined as the minimal size of a
partition of the vertex set of $G$ into cliques. For any graph $G$, the clique
covering number $\theta(G)$ is a natural upper bound on the independence number
$\alpha(G)$. Indeed, an independent set contains at most one vertex from each
clique. This implies that for any graph $G$ the ratio $\theta(G)/\alpha(G)$ is
at least one. Giving an upper bound on this ratio is a question that was studied for 
several classes of intersection graphs (\textit{e}.\textit{g}.\ \cite{1985_Gyarfas}, \cite{2008_Kim}).
In this topic, the main open question concerns the relation
between the independence number and the clique covering number in intersection graphs of axis-parallel rectangles.

\begin{conjecture}[Wegner \cite{1965_Wegner}, 1965] \label{conj:wegner}
	Let $G$ be the intersection graph of a set of axis-parallel rectangles in the plane. Then
	\[\theta(G) \leq 2\alpha(G) - 1.\]
\end{conjecture}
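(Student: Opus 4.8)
The plan is to recast the conjecture in the dual language of piercing and packing, where it becomes a clean extremal statement. For axis-parallel boxes the Helly number is $2$: since a rectangle is the product of its two projections, a family of rectangles pairwise intersects if and only if the $x$-projections pairwise intersect and the $y$-projections pairwise intersect, and one-dimensional Helly then produces a common point $(x_0,y_0)$ lying in every member. Hence a clique in $G$ is exactly a subfamily of rectangles with a common point, so $\theta(G)$ equals the \emph{piercing number} $\tau$---the minimum number of points stabbing all rectangles, since each stabbing point induces a clique and, conversely, each clique in a cover can be collapsed to a single stabbing point---and $\alpha(G)$ equals the \emph{packing number} $\nu$, the maximum number of pairwise-disjoint rectangles. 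The conjecture is therefore equivalent to the bound $\tau \le 2\nu - 1$.

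With this reformulation I would attempt induction on $\nu$. The base case $\nu = 1$ is immediate: if no two rectangles are disjoint then the whole family is a clique and a single point pierces it, matching $2\cdot 1 - 1 = 1$. For the inductive step the goal is to exhibit at most two piercing points whose rectangles, once deleted, leave a family of packing number exactly $\nu - 1$; the induction then yields $\tau \le 2 + (2(\nu-1)-1) = 2\nu - 1$. A natural device is a sweep: let $R$ be the rectangle whose right edge has the smallest $x$-coordinate $r$, and consider the vertical line $x = r$. Every rectangle lies weakly to the right of this edge, so the rectangles meeting the line have pairwise-intersecting $x$-projections; it then remains to stab their vertical extents with few points and to argue that the deleted block both ``uses up'' exactly one unit of the optimal packing and is cheap to pierce.

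The hard part---and the reason the conjecture has resisted since 1965---is precisely this charging: controlling, at each step, the number of points needed to lower the packing number by one, and doing so with an amortized cost of exactly $2$. Axis-parallel rectangles can nest and interleave so that a locally greedy choice either pierces far more than it should or fails to decrease $\nu$ at all, and the delicate coupling of the two coordinates resists the one-dimensional intuition. The strongest known results remain short of the conjectured constant, giving only weaker bounds or settling structurally restricted subfamilies; see \cite{1985_Gyarfas} for the foundational study of covering and packing for boxes. I would therefore expect the crux to be a tight combinatorial scheme that assigns at most two piercing points to each rectangle of a fixed maximum packing, and I do not expect the naive sweep above to deliver the exact factor $2$ without substantial new ideas.
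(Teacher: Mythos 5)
This statement is Wegner's conjecture, and the paper does not prove it --- nor does anyone: it has been open since 1965. The paper only \emph{states} it as motivation; the best known upper bound, as the paper notes, is $\theta(G) = \Oh(\alpha(G)\log^2(\log(\alpha(G))))$ by Correa et al.\ \cite{2015_Correa}, and no linear bound is known. The paper's own contribution runs in the opposite direction: \cref{thm:ub-full} and \cref{cor:ratio} show the constant $2$ in the conjecture could not be lowered even for triangle-free intersection graphs of axis-parallel segments. So there was never a ``paper's own proof'' to compare against.

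Your reformulation is sound as far as it goes: by the Helly property of axis-parallel boxes (products of intervals, each with one-dimensional Helly number $2$), cliques in $G$ are exactly point-stabbed subfamilies, so $\theta(G)$ equals the piercing number $\tau$ and $\alpha(G)$ the packing number $\nu$, and the conjecture is equivalent to $\tau \le 2\nu - 1$. The base case $\nu = 1$ is also correct. But the inductive step is precisely where the proof is missing, and you say so yourself: you never establish that two piercing points suffice to delete a subfamily whose removal drops $\nu$ by one, and the sweep-line heuristic you sketch is known to fail --- the rectangles crossing the line $x = r$ can require many points to pierce, and deleting them need not decrease the packing number in a controlled way. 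That amortized charging of two points per packing unit \emph{is} the conjecture; no reduction, duality, or sweep currently delivers it. To be clear: your proposal is not a flawed proof of a true theorem, it is an honest non-proof of an open problem, and your closing admission that ``substantial new ideas'' would be needed is exactly right. Had you instead claimed the induction closes, that would have been the fatal gap.
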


For an intersection graph $G$ of axis-parallel rectangles the best
known bound on the clique covering number is $\theta(G) =
\Oh(\alpha(G)\log^2(\log(\alpha(G))))$ by Correa et. al.~\cite{2015_Correa}. In
particular, no linear upper bounds are known.  Even, obtaining lower bounds on the
maximal ratio $\theta/\alpha$ was until recently an elusive task.  For nearly
thirty years after Wegner formulated his conjecture, the largest known ratio
remained $3/2$, obtained by taking five axis-parallel rectangles forming a
cycle. In 1993, Fon-Der-Flaass and Kostochka presented a family of axis-parallel
rectangles with clique cover number 5 and independence number
3~\cite{1993_FonDerFlaass}. Only in 2015, Jel\'inek constructed families of
rectangles with ratio $\theta/\alpha$ arbitrarily close to 2, showing that the
constant of $2$ in Wegner's conjecture cannot be improved\footnote{The former construction is attributed to Jel\'inek in \cite[Ackowledgment]{2015_Correa}. }.

One consequence of our results is a proof that the ratio $2$ in Wegner's
conjecture cannot be improved even in the highly restricted case of axis-parallel
segments, even with the assumption of triangle-freeness.  More precisely, we have the following corollary.

\begin{corollary}
    \label{cor:ratio}
	For any $\eps>0$, there exists a graph $G$ in $\graphs$ such that
    \[\theta(G)\ge (2-\eps)\alpha(G).\]
\end{corollary}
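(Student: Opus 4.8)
The plan is to derive Corollary~\ref{cor:ratio} by combining the extremal construction of Theorem~\ref{thm:upper_bound} with one structural observation: every graph in $\graphs$ has clique number at most $2$, so covering its vertices by cliques is expensive. Given that observation, the rest of the argument is a short asymptotic computation, and I would not expect any real difficulty in assembling the corollary once Theorem~\ref{thm:upper_bound} is available to invoke.

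First I would show that no $G \in \graphs$ contains a triangle. Suppose three pairwise intersecting segments were given. Two parallel axis-parallel segments can meet only if they are collinear and overlap, so by one-dimensional Helly three pairwise intersecting segments cannot all be horizontal (nor all vertical): three pairwise overlapping intervals on a common line share a common point, contradicting the no-three-through-a-point assumption. In the remaining case the triangle contains two segments of one orientation, say horizontal segments $h_1,h_2$, together with one vertical segment $v$. Since $h_1$ and $h_2$ intersect, they lie on a common horizontal line $\ell$ and overlap in a subinterval $I$ of $\ell$; since $v$ is vertical and meets both of them, it crosses $\ell$ in a single point that must lie in both $h_1$ and $h_2$, hence in $I$. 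But then $h_1$, $h_2$, and $v$ all pass through that point, again contradicting the assumption. Thus $\graphs$ is triangle-free, and since two segments may certainly cross, its clique number is exactly $2$.

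Consequently, for every $G \in \graphs$ on $n$ vertices each clique covers at most two vertices, so any clique cover uses at least $n/2$ cliques; that is,
\[
  \theta(G) \ge \frac{n}{2}.
\]
Now I would take, for a parameter $n$ to be fixed later, the graph $G \in \graphs$ supplied by Theorem~\ref{thm:upper_bound}, which satisfies $\alpha(G) \le \tfrac{n}{4} + c_2\sqrt{n}$. Combining the two bounds yields
\[
  \frac{\theta(G)}{\alpha(G)} \ge \frac{n/2}{\,n/4 + c_2\sqrt{n}\,} = \frac{2}{1 + 4c_2/\sqrt{n}},
\]
which tends to $2$ as $n \to \infty$. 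Hence, given $\eps > 0$, it suffices to pick $n$ large enough (concretely $\sqrt{n} \ge 4(2-\eps)c_2/\eps$) to force $\theta(G) \ge (2-\eps)\,\alpha(G)$, as desired.

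As for where the weight of the argument sits: the only nontrivial ingredient internal to the corollary is the clique-number bound of the second step, and even there the work is just the short geometric case analysis above. Once triangle-freeness is established, the statement reduces to the elementary inequality $\theta(G) \ge n/2$ and the asymptotics of the ratio. The genuinely hard content lives entirely in Theorem~\ref{thm:upper_bound}, which I am permitted to assume, so the main obstacle has already been dealt with before the corollary is reached.
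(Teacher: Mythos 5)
Your proof is correct, and it takes a mildly but genuinely different route from the paper's. The paper deliberately derives \cref{cor:ratio} from the \emph{full} version of the upper bound (\cref{thm:ub-full}), which supplies the exact values $\theta(G_k)=2k^2$ and $\alpha(G_k)=k^2+3k-2$, so the corollary is a one-line consequence: $\theta(G_k)/\alpha(G_k)=2k^2/(k^2+3k-2)\to 2$. You instead show that the plain asymptotic statement of \cref{thm:upper_bound} already suffices, by adding one general structural fact: every graph in $\graphs$ is triangle-free (your case analysis --- collinear overlapping intervals have a common point by one-dimensional Helly, and a vertical segment meets a horizontal line in a single point --- is sound and matches the assumption that no three segments pass through one point), whence $\theta(G)\ge n/2$ for \emph{every} $n$-vertex $G\in\graphs$, and the ratio $\tfrac{n/2}{n/4+c_2\sqrt{n}}\to 2$; your explicit choice $\sqrt{n}\ge 4(2-\eps)c_2/\eps$ checks out. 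What each approach buys: yours is more self-contained and more robust, since it shows that \emph{any} family attaining $\alpha\le n/4+O(\sqrt n)$ witnesses ratio $2-\eps$, with triangle-freeness proved for the whole class rather than just observed for $\Mm_k$ (the paper itself only uses triangle-freeness of $G_k$ to get $\theta(G_k)\ge 2k^2$, so the underlying mechanism is the same); the paper's route, in exchange, pins down exact constants and, via $\alpha^\star(G_k)=2k^2$, simultaneously yields the stronger fractional statement \cref{cor:LP}, which your argument as written does not address (though it would extend: the all-$\tfrac12$ assignment gives $\alpha^\star(G)\ge n/2$ for triangle-free $G$).
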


\Cref{cor:ratio} is a consequence of the full version of
\cref{thm:upper_bound} (see Section~\ref{sec:upper_bound}).

% For the second consequence, we consider a segment to be a degenerated rectangle.
% Thus, a set of axis-parallel segments is seen as a set of degenerated axis-parallel rectangles. \cref{cor:ratio} shows that the multiplicative constant of 2 is tight already in the special case of axis-parallel segments.
% The upper bound on the ratio $\theta/\alpha$ given for $d$-DIR graphs implies that \cref{conj:wegner} holds for the case of axis-parallel segments.

% \paragraph{Further Related Work.} In the case of segments which lie in at most $d$
% different directions, finding the independence number is fixed parameter
% tractable \cite{2006_Kara,marx06}.  This means that there is an algorithm
% calculating the independence number in time $f(d, \alpha) \cdot n^{\Oh(1)}$ for
% some function $f$.  Such an intersection graph is also called a $d$-DIR graph
% and for $d \ge 2$ even recognizing if a given graph $G$ is a $d$-DIR graph is
% NP-hard \cite{1994_Kratochvil}.

\Cref{cor:ratio} can be further strengthened to the fractional setting, implying a lower bound on the integrality gap of the standard LP relaxation of the independent set problem. Namely, consider the {\em{fractional independence number}} of a graph $G$, denoted $\alpha^\star(G)$, which is defined similarly to $\alpha(G)$, but every vertex $u$ can be included in the solution with a fractional multiplicity $x_u\in [0,1]$, and the constraints are that $x_u+x_v\leq 1$ for every edge $uv$ of $G$.  Similarly, in the {\em{fractional clique cover number}} $\theta^\star(G)$ every clique $K$ in $G$ can be included in the cover with a fractional multiplicity $y_K\in [0,1]$, and the constraints are that $\sum_{K\colon v\in K} y_K\geq 1$ for every vertex $v$. In triangle-free graphs the linear programs defining $\alpha^\star(G)$ and $\theta^\star(G)$ are dual to each other, hence
$$\alpha(G)\leq \alpha^\star(G) = \theta^\star(G)\leq \theta(G)\qquad\textrm{for every triangle-free }G.$$
The proof of \Cref{cor:ratio}, based on the full version of \cref{thm:upper_bound}, actually gives the following.

\begin{corollary}\label{cor:LP}
 For any $\eps>0$, there exists a graph $G$ in $\graphs$ such that
    \[\alpha^\star(G)\ge (2-\eps)\alpha(G).\]
 Consequently, the integrality gap of the standard LP relaxation of the maximum independent set problem in graphs from $\graphs$ is not smaller than $2$.
\end{corollary}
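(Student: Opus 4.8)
The plan is to prove \Cref{cor:LP} by strengthening the construction behind \cref{thm:upper_bound} so that it certifies not just a large clique cover number but a large \emph{fractional} independence number, and then to read off the integrality gap statement. The key observation is that since the graphs in $\graphs$ are triangle-free (indeed, no three axis-parallel segments can pairwise intersect without a common point, and such a common point is forbidden), LP duality gives $\alpha^\star(G)=\theta^\star(G)$. Thus it suffices to exhibit, for every $\eps>0$, a graph $G\in\graphs$ with $\theta^\star(G)\geq (2-\eps)\alpha(G)$, or equivalently to lower-bound $\alpha^\star(G)$ directly by a large fractional independent set.

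First I would revisit the extremal family witnessing \cref{thm:upper_bound}: take the construction on $n$ vertices with $\alpha(G)\le n/4 + c_2\sqrt{n}$, and let $n\to\infty$ so that $\alpha(G)/n \to 1/4$. The chromatic-number argument in the introduction shows $\chi(G)\le 4$, so every $G\in\graphs$ has a proper $4$-coloring; assigning fractional weight $x_u=1/2$ to every vertex is feasible for the fractional independent set LP precisely because the graph is triangle-free, as every edge constraint $x_u+x_v\le 1$ is then satisfied with equality. This immediately yields $\alpha^\star(G)\ge n/2$. Combining with $\alpha(G)\le n/4+c_2\sqrt{n}$ gives
\begin{displaymath}
 \frac{\alpha^\star(G)}{\alpha(G)} \ge \frac{n/2}{n/4 + c_2\sqrt{n}} = \frac{2}{1 + 4c_2/\sqrt{n}},
\end{displaymath}
which tends to $2$ as $n\to\infty$. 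Hence for any $\eps>0$ one can pick $n$ large enough that the ratio exceeds $2-\eps$, proving the first inequality of the corollary.

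The integrality-gap consequence then follows formally: the standard LP relaxation of maximum independent set has optimum exactly $\alpha^\star(G)$, while the integral optimum is $\alpha(G)$, so the gap is $\alpha^\star(G)/\alpha(G)\ge 2-\eps$ for suitable $G\in\graphs$. Since this holds for every $\eps>0$, the integrality gap over the class $\graphs$ is at least $2$. I would note that the chain $\alpha\le\alpha^\star=\theta^\star\le\theta$ also recovers \Cref{cor:ratio} as a weaker consequence, since $\theta^\star\le\theta$ means $\alpha^\star\le\theta$ and hence the same family gives $\theta(G)\ge(2-\eps)\alpha(G)$ without needing a separate clique-cover analysis.

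The main obstacle is not the fractional or LP-duality bookkeeping, which is essentially the uniform half-weighting argument above, but rather ensuring that the family from \cref{thm:upper_bound} genuinely realizes $\alpha(G)$ close to $n/4$ with the segments being in the required general position (no three through a common point) and honestly triangle-free, so that the half-weighting is feasible and duality applies. In other words, the real work lives in the construction underlying \cref{thm:upper_bound}; once that construction is in hand with its stated guarantee, the passage to the fractional and integrality-gap statements is immediate. I would therefore treat the verification that the extremal graphs lie in $\graphs$ and satisfy triangle-freeness as the step requiring the most care, while the present corollary reduces to invoking that construction and the elementary half-weighting.
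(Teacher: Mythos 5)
Your proposal is correct and takes essentially the same route as the paper: \Cref{cor:LP} is derived there from \cref{thm:ub-full}, whose lower bound $\alpha^\star(G_k)\ge 2k^2$ is obtained by exactly your uniform half-weighting of all vertices, combined with the upper bound $\alpha(G_k)=k^2+3k-2$ so that the ratio tends to $2$ as $k\to\infty$. One minor inaccuracy that does not affect correctness: feasibility of $x_u=1/2$ requires only the edge constraints $x_u+x_v\le 1$ and thus holds in \emph{any} graph; triangle-freeness is needed for the duality $\alpha^\star=\theta^\star$ (and hence for relating to $\theta$), not for the feasibility of the half-weighting.
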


We note that recently, G\'alvez et al. gave a polynomial-time $(2+\eps)$-approximation algorithm for the maximum independent set problem in intersection graphs of axis-parallel rectangles~\cite{GalvezKMMPW22-arxiv,GalvezKMMPW22}. Thus, \cref{cor:LP} shows that one cannot improve upon the approximation ratio of $2$ by only relying on the standard LP relaxation, even in the case of axis-parallel segments. Note that in this case, obtaining a $2$-approximation algorithm is very easy: restricting attention to either horizontal or vertical segments reduces the problem to the setting of interval graphs, where it is polynomial-time solvable.
% 
% \paragraph*{Organization.}
% In \cref{sec:upper_bound} we construct a family of
% graphs to prove \cref{thm:upper_bound}. In \cref{sec:lower_bound} we give the
% constructions of independent sets for any graph in the class of graphs $\graphs$
% used to prove \cref{thm:lower bound}. The construction tight for the family
% given for \cref{thm:upper_bound} is based on a classical result of Erd\H{o}s and
% Szekeres.

\section{The lower bound: proof of \cref{thm:lower bound}}
\label{sec:lower_bound}
\newcommand{\ply}{\mathrm{ply}}
\newcommand{\lhor}{\ell_{\mathrm{horizontal}}}
\newcommand{\lver}{\ell_{\mathrm{vertical}}}
\newcommand{\leven}{\ell_{\mathrm{even}}}
\newcommand{\lodd}{\ell_{\mathrm{odd}}}
\newcommand{\seven}{s_{\mathrm{even}}}
\newcommand{\sodd}{s_{\mathrm{odd}}}
\newcommand{\hor}{\mathrm{horizontal}}
\newcommand{\ver}{\mathrm{vertical}}

The goal of this section is to prove \cref{thm:lower bound}.  For this, we examine a graph $G\in \graphs$, and we
exhibit three different independent sets in $G$ by
constructing three different subsets of disjoint segments.  A trade-off between
these three independent sets then results in a lower bound.

The set of geometric objects $S$ is called a \emph{representation} of its
intersection graph $G(S)$ (note that a graph can have multiple representations).
Our proof starts with some observations on the possible sets of segments representing a graph in the class $\graphs$.
Let $G$ be a graph in $\graphs$ with $n$ vertices and let $\Ss$ be a representation of $G$. Thus, $\Ss$ consists of axis-parallel segments, no three of which meet at one point.
We may assume that in $\Ss$ every two parallel segments that intersect meet at a single point, called the \emph{meeting point}.
If two segments do not meet at a single point, we can choose any common point and shorten both segments up to this common point.
Since no three segments of $\Ss$ meet at one point, all intersections are preserved and the modified set of segments is still a representation of $G$.
Further, we may assume that if two orthogonal segments intersect, their intersection point lies in the interiors of both of them. Indeed, otherwise we could slightly extend one or both of these segments around the meeting point.
Finally, we may assume that the segments of $\Ss$ lie on a grid of size $\lhor\times\lver$ so that the segments lying on the same grid line induce a path in the intersection graph. Indeed, if on a single grid line the segments induce a disjoint union of several paths, then we can move these paths slightly so that they are realized on separate grid lines. A representation $\Ss$ of $G$ with the properties described above is called \emph{favorable}. 

%Note that the sets $\Mm_k$ constructed in \cref{sec:upper_bound} satisfies all these properties, hence it is a favorable representation of its intersection graph.

To give constructions for the subsets of pairwise disjoint segments in a favorable representation $\Ss$, we first need some notation.
Suppose the $\lhor\times\lver$ grid has $\leven$ grid lines with an even number of segments and $\lodd$ grid lines with an odd number of segments.
In total there are $\seven$ segments which lie on a grid line with an even number of segments and $\sodd$ segments which lie on a grid line with an odd number of segments.
The maximum number of segments lying on a single grid line is $t$.

The following three lemmas correspond each to a different set of pairwise disjoint segments in $\Ss$. In all three lemmas, we assume $\Ss$ to be a favorable representation of a graph in $\graphs$ with $n$ vertices.

\begin{lemma}\label{lem:odd technique}
	There exists a subset of $\Ss$ consisting of $\frac{n}{4} + \frac{\lodd}{4}$ pairwise disjoint segments.
\end{lemma}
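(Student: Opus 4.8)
The plan is to build the desired family by selecting segments in an alternating fashion along every grid line, and then keeping either all horizontal selections or all vertical selections, whichever is larger. Since $\Ss$ is favorable, the segments lying on a fixed grid line induce a path $v_1 - v_2 - \cdots - v_k$ in the intersection graph, where consecutive segments meet at their shared meeting point and non-consecutive segments are disjoint. On such a path the odd-indexed segments $v_1, v_3, v_5, \ldots$ form a set of pairwise disjoint segments of size $\ceil{k/2}$, which is the largest one can extract from a path on $k$ vertices.

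Concretely, first I would let $H$ be the union, over all horizontal grid lines, of the odd-indexed segments of the path on that line, and let $V$ be the analogous union over all vertical grid lines. I claim both $H$ and $V$ are independent sets, i.e., consist of pairwise disjoint segments. Indeed, $H$ contains only horizontal segments, so it has no crossing (orthogonal) pairs; two of its segments on the same horizontal line have indices differing by at least two and are therefore non-adjacent on that line's path, while two on distinct horizontal lines are parallel segments with different $y$-coordinates and hence disjoint. The argument for $V$ is symmetric.

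Next I would count. A grid line carrying $k$ segments contributes $\ceil{k/2}$ segments to its side, which equals $k/2$ when $k$ is even and $(k+1)/2$ when $k$ is odd. Summing over the horizontal lines, $|H|$ equals one half of the number of horizontal segments plus one half of the number of horizontal grid lines carrying an odd number of segments; likewise for $|V|$ on the vertical side. Adding the two equalities, and using that every segment and every odd grid line is counted exactly once on one of the two sides, I obtain
\[
  |H| + |V| = \frac{n}{2} + \frac{\lodd}{2}.
\]
Hence the larger of $H$ and $V$ has at least $\frac{n}{4} + \frac{\lodd}{4}$ segments, and any subset of it of that size proves the lemma.

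I expect the only real subtlety, rather than a genuine obstacle, to be the accounting that isolates the $\frac{\lodd}{4}$ gain. The standard four-coloring into horizontal-odd, horizontal-even, vertical-odd, and vertical-even classes only yields an average class size of $n/4$, so one must resist averaging over all four classes and instead average over just the two odd-parity classes $H$ and $V$, whose sizes sum to $\frac{n+\lodd}{2}$ precisely because each odd grid line donates one extra segment to its odd-indexed selection. The favorability assumption is exactly what makes the per-line picture a clean path, which is what guarantees that $H$ and $V$ are independent.
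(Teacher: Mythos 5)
Your proposal is correct and matches the paper's own proof essentially step for step: select every second segment along each grid line (gaining $\tfrac{1}{2}$ per odd line for a total of $\tfrac{n}{2}+\tfrac{\lodd}{2}$ selected segments), observe that within the selection only horizontal--vertical crossings could occur so the horizontal and vertical parts are each pairwise disjoint, and take the larger part by pigeonhole. Your bookkeeping with $|H|+|V|=\tfrac{n}{2}+\tfrac{\lodd}{2}$ and the path-structure justification from favorability are just a slightly more explicit rendering of the same argument.
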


\begin{lemma}\label{lem:line technique}
	There exists a subset of $\Ss$ consisting of $\frac{n}{4} + \frac{t}{4}$ pairwise disjoint segments.
\end{lemma}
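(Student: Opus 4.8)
The plan is to single out the grid line that carries $t$ segments and use it as a \emph{backbone}, paying off its entire length by splitting its segments between two independent sets. Fix such a line $L^\star$; by symmetry assume it is horizontal and write $q_1,\dots,q_t$ for its segments in left-to-right order, so that they form a path with $q_i$ meeting only $q_{i-1}$ and $q_{i+1}$ among the segments on $L^\star$. The geometric fact that drives the argument is that \emph{each vertical segment crosses at most one of the $q_i$}: the $q_i$ tile the supporting line of $L^\star$ and meet only at their common endpoints, and since no three segments of $\Ss$ pass through one point, a vertical segment meets this line only in the interior of a single $q_i$, if at all. Hence the verticals split according to which $q_i$ (if any) they cross, and each is blocked by at most one backbone segment.

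First I would form the two ``combs'' $A_{\mathrm{odd}}=\{q_i : i\text{ odd}\}$ and $A_{\mathrm{even}}=\{q_i : i\text{ even}\}$. Each is independent on the line, and $|A_{\mathrm{odd}}|+|A_{\mathrm{even}}|=t$, so together they account for the full backbone. By the geometric fact, a vertical crossing some $q_i$ is incompatible with exactly one comb (the one containing $q_i$), and a vertical crossing no $q_i$ is compatible with both. I would then extend each comb into an independent set of all of $\Ss$: to $A_{\mathrm{odd}}$ I adjoin a large independent set $R_{\mathrm{odd}}$ of remaining segments crossing none of the odd-indexed $q_i$, and symmetrically $R_{\mathrm{even}}$. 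Every off-backbone horizontal segment lies on a different grid line, hence is automatically compatible with both combs; only verticals interact with the backbone, and each is forbidden from at most one extension. Averaging $A_{\mathrm{odd}}\cup R_{\mathrm{odd}}$ and $A_{\mathrm{even}}\cup R_{\mathrm{even}}$ and keeping the larger, the backbone contributes $t/2$ on average, so it suffices to guarantee $|R_{\mathrm{odd}}|+|R_{\mathrm{even}}|\ge \tfrac{n-t}{2}$ to obtain an independent set of size $\tfrac n4+\tfrac t4$. Note that here \emph{no crossing vertical is lost}: each is eligible for exactly the extension whose comb avoids it, and free verticals are eligible for both.

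The main obstacle is securing $|R_{\mathrm{odd}}|+|R_{\mathrm{even}}|\ge\tfrac{n-t}{2}$. If one only uses verticals in the extensions, the clean routing above gives $|R_{\mathrm{odd}}|+|R_{\mathrm{even}}|\ge n_V/2$ (where $n_V$ is the number of verticals), which ignores all off-backbone \emph{horizontal} segments and therefore falls short whenever those are plentiful; and simply reusing a single large independent set $R^\star$ off $L^\star$, deleting its odd- and even-blocked verticals respectively, only yields $|R^\star|$ in total. The real content is thus to include both orientations among the non-backbone segments while controlling the \emph{orthogonal crossings between them}, which is exactly the two-dimensional difficulty that the backbone split does not touch. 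Here I would apply the single-orientation idea behind \cref{lem:odd technique} to the residual instance $\Ss\sm L^\star$ and argue that its horizontal contribution can be shared by both extensions (horizontals never block the backbone) while its vertical contribution is routed by parity to the two sides, reducing the claim to a counting statement about how the $t$ backbone segments interleave with the verticals crossing them. Making this exchange lossless---so that the horizontal segments retained in one extension do not crowd out the verticals retained there via interior crossings, which the favorable representation localizes---is the crux I would expect to occupy the bulk of the proof.
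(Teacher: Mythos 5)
There is a genuine gap, and you flag it yourself: your entire argument hinges on producing extensions with $|R_{\mathrm{odd}}|+|R_{\mathrm{even}}|\ge \frac{n-t}{2}$, and your final paragraph concedes that this step---mixing off-backbone horizontal and vertical segments while controlling their mutual crossings---is left unresolved. Worse, the demand is essentially circular: the residual instance $\Ss\setminus L^\star$ is an arbitrary configuration from the same class with $n-t$ segments, and asking your two extensions to absorb on average half of it is asking for a guarantee of strength $1/2$ on a general instance, whereas the generic guarantee (the four-coloring baseline that this lemma is meant to beat) is only $1/4$. The parity-routing of the verticals that cross the backbone, correct as far as it goes, does not address this at all: the bottleneck is not the backbone but the crossings among the off-backbone segments of opposite orientations, and no counting statement about how the $q_i$ interleave with the verticals crossing them touches those conflicts.

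The paper's proof avoids the difficulty by never mixing the two orientations off the distinguished line. It forms two candidate sets: all vertical segments together with the segments of a heaviest horizontal grid line, and all horizontal segments together with the segments of a heaviest vertical grid line. Writing $s_{\mathrm{horizontal}}+s_{\mathrm{vertical}}=n$ and $t=\max\{t_{\mathrm{horizontal}},t_{\mathrm{vertical}}\}$, the larger candidate set has at least $\frac{n+t}{2}$ segments. The key observation, which replaces your whole extension machinery, is that each candidate set induces a \emph{bipartite} intersection graph: any cycle would require horizontal segments on two distinct horizontal grid lines and vertical segments on two distinct vertical grid lines, but each candidate set uses only a single grid line of one of the two orientations. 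Taking the larger side of the bipartition then yields $\frac{n+t}{4}$ at once---no parity split of the backbone, no routing of crossing verticals, and no independent-set problem on the residual instance. Your odd/even comb recovers the $t/2$ contribution of one line on average, but it cannot substitute for this bipartiteness, which is what lets the paper keep half of all segments of one full orientation losslessly; if you want to salvage your plan, the missing ingredient is precisely some such structural reason (acyclicity or bipartiteness of a carefully chosen superset) rather than a finer count.
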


\begin{lemma}\label{lem:even technique}
	There exists a subset of $\Ss$ consisting of $\frac{n}{4} + \frac{\sqrt{2\seven}}{4} - \frac{\lodd}{4}$ pairwise disjoint segments.
\end{lemma}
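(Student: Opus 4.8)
The plan is to reduce to the case where every grid line carries an even number of segments, and then run a ``staircase'' argument on the even lines. For the reduction, from every line that holds an odd number of segments I delete one of the two extreme segments of the path it induces. Deleting an endpoint of a path leaves a (shorter) path, so the resulting set $\Ss'$ is again a favorable representation, now with every line even, and it has $N \coloneqq n - \lodd$ segments. Any set of pairwise disjoint segments of $\Ss'$ is also one of $\Ss$, so it suffices to produce $\tfrac N4 + \tfrac{\sqrt{2N}}4$ pairwise disjoint segments in $\Ss'$: indeed $N = \seven + (\sodd - \lodd) \ge \seven$, since each odd line has at least one segment, hence $\tfrac N4 + \tfrac{\sqrt{2N}}4 \ge \tfrac n4 - \tfrac{\lodd}4 + \tfrac{\sqrt{2\seven}}4$, which is exactly the claimed bound. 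This reduction is the source of the additive penalty $-\lodd/4$, and it lets me forget about parity entirely.

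I am thus left with the core claim: if every line of a favorable representation is even and there are $N$ segments in total, then there are $\tfrac N4 + \tfrac{\sqrt{2N}}4$ pairwise disjoint segments. As in the proof of \cref{lem:odd technique}, I colour each induced path alternately, obtaining two independent classes of horizontal segments $H_0,H_1$ and two of vertical segments $V_0,V_1$. Because every line is even, $|H_0|=|H_1|=|H|/2$ and $|V_0|=|V_1|=|V|/2$, so the best single class already gives $\max(|H|,|V|)/2 \ge N/4$; and if $|H|$ and $|V|$ are far apart, one class alone already beats $\tfrac N4 + \tfrac{\sqrt{2N}}4$. The only regime that is not immediately done is $|H|\approx|V|\approx N/2$, so the extra $\sqrt N$ must come from combining the two orientations. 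The key leverage is that evenness lets me decide, independently on each line, which alternating class is the ``selected'' one -- a free phase bit per line -- and this freedom is what I will spend to overshoot $N/4$.

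The idea is to build one independent set by a sweep over the lines, starting from a full class (say $H_0$, of size $\ge N/4$) and then repeatedly augmenting it along a monotone diagonal of the arrangement. At the $r$-th step I bring in a short block of segments of the opposite orientation and, using the phase bits, realign the nearby selected segments so that the block conflicts only with segments that were already paid for at earlier steps. Each step then contributes a net gain of one disjoint segment while ``consuming'' a number of segments that grows with $r$, so after $g$ steps the consumed segments number about $1+2+\cdots+g=\binom{g+1}{2}$. Since at most $N$ segments are available to be consumed, the sweep runs for $g=\Theta(\sqrt N)$ steps, and a careful accounting of the constant matches the triangular-number shape and yields $g \ge \sqrt{2N}$. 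Symmetrically, biasing the sweep towards the vertical orientation gives a second independent set; taking the larger of the two, exactly as in \cref{lem:odd technique}, converts the raw gain in their combined size into the stated gain of one quarter of $\sqrt{2N}$.

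The main obstacle lies entirely in this core claim, and specifically in guaranteeing that each augmentation is a genuine net $+1$: an added block of vertical segments may cross many currently selected horizontal segments, and a priori this could force removing more than it adds. This is where the geometry enters. Because no three segments pass through a common point, two consecutive segments of one path cannot both cross the same orthogonal segment -- otherwise that segment would meet both of them at their shared meeting point -- so a single added segment meets at most one segment from each path it crosses, which bounds its conflicts by its extent and lets the phase bits absorb the remainder. Making this local control precise, and in particular verifying that the per-step cost telescopes into the triangular sum so that the number of steps reaches $\sqrt{2N}$ rather than some smaller multiple of $\sqrt N$, is the delicate part; the reduction of the first paragraph and the orientation-averaging of the third are routine by comparison.
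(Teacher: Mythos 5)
Your opening reduction is sound and even tidy: deleting one extreme segment from each odd line keeps the representation favorable, costs exactly $\lodd$ segments, and since $n-\lodd=\seven+(\sodd-\lodd)\ge\seven$, the bound $\frac{N}{4}+\frac{\sqrt{2N}}{4}$ for the all-even case would indeed imply the stated bound. The closing orientation-averaging is likewise fine. The problem is that everything in between --- the entire content of the lemma --- is asserted rather than proved. Your sweep needs, at step $r$, an augmentation that is a guaranteed net $+1$ at a cost of roughly $r$ consumed segments; you never construct such a step, never specify what a ``block'' or a ``realignment via phase bits'' means concretely, and never show that the conflicts of an added block land only on segments already paid for. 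The one geometric fact you do establish (an orthogonal segment meets at most one segment per grid line, by the no-three-concurrent assumption) is true but far from sufficient: it bounds conflicts per line, not the number of lines an added segment crosses, so a single augmentation can a priori cost arbitrarily much. Moreover your accounting runs in the wrong direction: from ``step $r$ consumes about $r$ segments'' and ``at most $N$ segments available'' one gets $\binom{g+1}{2}\le N$, i.e.\ an \emph{upper} bound $g\lesssim\sqrt{2N}$ on the number of steps, not the lower bound $g\ge\sqrt{2N}$ you need; a lower bound would additionally require proving that the sweep cannot stall before consuming essentially all of $\Ss$, which is again unargued.

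What is missing is precisely the mechanism the paper uses to manufacture the $\sqrt{\cdot}$ gain. The paper defines, on each even line, the \emph{candidate points} (meeting points splitting the line's path into two odd halves), of which there are exactly $\seven/2$, and applies the Erd\H{o}s--Szekeres theorem (\cref{thm:erdos szekeres}) to extract a monotone sequence of $C\ge\sqrt{\seven/2}$ of them; a polyline cut through these points then lets one run the alternating selection of \cref{lem:odd technique} from both sides of the cut, gaining one whole segment at each of the $C$ cutting points. Your ``monotone diagonal of the arrangement'' plays exactly this role, but you posit its existence instead of deriving it --- and deriving it is the heart of the matter: without Erd\H{o}s--Szekeres (or an equivalent), nothing guarantees a monotone structure of length $\Theta(\sqrt{N})$, and the tight construction $\Mm_k$ of \cref{sec:upper_bound} shows the gain really is governed by the longest monotone chain of meeting points. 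So the proposal has a genuine gap at its core, one you half-acknowledge by calling the key step ``the delicate part''; your parity reduction, though, is a legitimate simplification that could be grafted onto the paper's argument to replace its separate odd-line accounting.
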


Before proving these lemmas, we use them to conclude \cref{thm:lower bound}.

\begin{proof}[Proof of \cref{thm:lower bound}]
	Let $G$ be a graph in the class $\graphs$ with $n$ vertices and let $\Ss$ be a favorable representation of $G$. A subset of pairwise disjoint segments in $\Ss$ corresponds to an independent set in $G$ of the same size. We distinguish three cases.
	If $\lodd\ge\sqrt{n}/c$ for some constant $c$, by \cref{lem:odd technique} $G$ has an independent set of size at least
	\[\frac{n}{4} + \frac{1}{4c}\cdot \sqrt{n}.\]
	If $\lodd\le\sqrt{n}/c$ and $\seven \ge 2n/c^2$, by \cref{lem:even technique} $G$ has an independent set of size at least
	\begin{align*}
	\frac{n}{4} + \frac{\sqrt{2\seven}}{4} - \frac{\lodd}{4}&\ge
	\frac{n}{4} + \frac{\sqrt{4n}}{4c} - \frac{\sqrt{n}}{4c}\\ &\ge
	\frac{n}{4} + \frac{1}{4c}\cdot \sqrt{n}.
	\end{align*}
	If $\lodd\le\sqrt{n}/c$ and $\seven \le 2n/c^2$, we get $\sodd \ge n(1-2/c^2)$ using $\seven+\sodd=n$. Then the maximum number of segments $t$ lying on a single line is at least
	\[
	t \ge \frac{\sodd}{\lodd} \ge \frac{n(1-2/c^2)}{\sqrt{n}/c} =
	\frac{c^2-2}{c}\cdot \sqrt{n}.
	\]
	By \cref{lem:line technique} we get an independent set of $G$ of size at least 
	\[\frac{n}{4} + \frac{c^2-2}{4c}\cdot \sqrt{n}.\]
	Setting $c=\sqrt{3}$ gives the desired result: there is always an independent set of size at least $\frac{n}{4}+\frac{1}{4\sqrt{3}}\cdot \sqrt{n}$.
\end{proof}

It remains to prove the three lemmas. 

\begin{proof}[Proof of \cref{lem:odd technique}]
	This construction exploits grid lines with an odd number of segments on them.
	For each grid line, select every second segment lying on that line, starting from the leftmost.
	If the grid line has an even number of segments, exactly half of the segments are selected.
	If the grid line has an odd number of segments, the selected number of segments is half rounded up.
	This corresponds to selecting exactly half of all the segments and adding $1/2$ for each grid line with an odd number of segments. In total,
	\[\frac{n}{2}+\frac{\lodd}{2}\]
	segments are selected.
	
	By construction of this subset, two segments are only intersecting if one is horizontal and the other one is vertical. The set can be partitioned into horizontal and vertical segments with both parts only containing pairwise disjoint segments. By the pigeonhole principle, one of the two parts contains at least half of the selected segments. 
\end{proof}

\begin{proof}[Proof of \cref{lem:line technique}]
	This construction exploits a single grid line with many segments on it.
	Let $g_\hor$ be a horizontal grid line with the maximum number of segments $t_\hor$ lying on it. Let $s_\ver$ be the total number of vertical segments.
	Let $\Ss_\hor$ be the set of consisting of all segments lying on $g_\hor$ and all vertical segments.
    Analogously define  $g_\ver$, $t_\ver$, $s_\hor$, and $\Ss_\ver$.
	Now we choose the larger set among $\Ss_\ver$ and $\Ss_\hor$.
	The size of this set is
	\begin{align*}
	\max\{s_\hor + t_\ver,s_\ver + t_\hor\} &\ge \frac{s_\hor + t_\ver + s_\ver + t_\hor}{2}\\
	&\ge \frac{n + t}{2}
	\end{align*}
	For
	the second inequality, we use assertions $s_\hor+s_\ver = n$ and $t=\max\{t_\hor,t_\ver\}$.
	
	We now observe that the intersection graphs of both sets $S_\ver$ and $S_\hor$ are bipartite.
	Indeed, any cycle in the intersection graph has to contain at least two horizontal segments lying on thwo different horizontal grid lines, and two vertical segments lying on two different vertical grid lines. But $S_\ver$ contains horizontal segments from only one horizontal grid line, while $S_\hor$ contains vertical segments from only one vertical grid line.
	In a bipartite graph the vertices can be partitioned into two independent sets $A$ and $B$, one of which contains at least half of the vertices. Hence, the larger of the sets $S_\ver$ and $S_\hor$ contains an independent set of size at least $\frac{n+t}{4}$.
\end{proof}

The proof of \cref{lem:even technique} heavily depends on the following classic theorem of Erd\H{o}s and Szekeres, here rephrased in the plane setting. We say that a sequence of points in the plane is {\em{non-decreasing}} if both their first and second coordinates are non-decreasing along the sequence; it is {\em{non-increasing}} if the first coordinate is non-decreasing along the sequence while the second is non-increasing.

\begin{theorem}[Erd\H{o}s, Szekeres \cite{erdos1935combinatorial}]\label{thm:erdos szekeres}
	Given $n$ distinct points on the plane, it is always possible to choose at least $\sqrt{n}$ of them and arrange into a sequence so that this sequence is either non-increasing or non-decreasing.
\end{theorem}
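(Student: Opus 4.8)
The plan is to reduce the planar statement to the classical one-dimensional Erdős–Szekeres bound about monotone subsequences of a sequence of reals, and then to prove that bound by the standard pigeonhole (Seidenberg-type) labeling argument. First I would fix a total order on the given $n$ points by sorting them lexicographically: primarily by first coordinate in non-decreasing order, with ties broken by second coordinate in non-decreasing order. Writing the sorted points as $p_1,\dots,p_n$ with $p_i=(x_i,y_i)$, the first coordinates satisfy $x_1\le \dots\le x_n$ by construction. The key observation is that a subsequence $p_{i_1},\dots,p_{i_k}$ with $i_1<\dots<i_k$ whose second coordinates are non-decreasing is automatically a non-decreasing planar sequence (both coordinates non-decreasing), whereas one whose second coordinates are \emph{strictly} decreasing is a non-increasing planar sequence: indeed, strict decrease of the $y$-values rules out two selected points sharing an $x$-coordinate (ties were broken by increasing $y$), so along such a subsequence the first coordinates strictly increase while the second ones decrease. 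Thus it suffices to find, in the real sequence $y_1,\dots,y_n$, either a non-decreasing subsequence or a strictly decreasing subsequence of length at least $\sqrt n$.

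To do this I would attach to each index $i$ a pair of labels: let $u_i$ be the length of the longest non-decreasing subsequence of $(y_j)$ ending at position $i$, and let $v_i$ be the length of the longest strictly decreasing subsequence ending at position $i$. For any two indices $i<j$ exactly one of $y_i\le y_j$ or $y_i>y_j$ holds; in the first case appending $y_j$ to an optimal non-decreasing subsequence ending at $i$ gives $u_j\ge u_i+1$, and in the second case appending $y_j$ to an optimal strictly decreasing one gives $v_j\ge v_i+1$. Hence the map $i\mapsto (u_i,v_i)$ is injective.

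Let $L_1=\max_i u_i$ and $L_2=\max_i v_i$ be the lengths of the longest non-decreasing and longest strictly decreasing subsequences of $(y_j)$. Since the $n$ pairs $(u_i,v_i)$ are distinct and lie in $\{1,\dots,L_1\}\times\{1,\dots,L_2\}$, we get $n\le L_1 L_2$, so $\max(L_1,L_2)\ge \sqrt n$. If $L_1\ge\sqrt n$, the witnessing non-decreasing subsequence of $(y_j)$ yields a non-decreasing planar sequence through at least $\sqrt n$ of the points; if instead $L_2\ge\sqrt n$, the witnessing strictly decreasing subsequence yields a non-increasing planar sequence, by the correspondence established above. In either case we have selected at least $\sqrt n$ of the points and arranged them monotonically, as required.

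The only genuinely delicate point, and the one I would take care to get right, is the handling of ties (points sharing a coordinate): the asymmetric choice of \emph{non-decreasing} for $u_i$ and \emph{strictly} decreasing for $v_i$, combined with the lexicographic tie-break by increasing second coordinate, is exactly what guarantees that a decreasing label really certifies a valid non-increasing planar sequence rather than an artefact of equal $x$-coordinates. The pigeonhole counting itself is routine once the labels are correctly set up.
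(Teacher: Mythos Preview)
Your proof is correct: the lexicographic sorting together with the asymmetric choice of non-decreasing versus strictly decreasing subsequences handles ties properly, and the Seidenberg labeling argument is carried out cleanly. There is no comparison to make, however, because the paper does not prove this theorem at all; it simply cites Erd\H{o}s and Szekeres~\cite{erdos1935combinatorial} and uses the statement as a black box inside the proof of \cref{lem:even technique}.
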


% 
% Referring back to the sequence of graphs constructed in \cref{sec:upper_bound}, we may observe that for any $k$ the meeting points on the vertical grid lines (or on the horizontal grid lines) of $\Mm_k$ constitute a set of $k^2$ points in which the maximum size of a non-increasing, or non-decreasing, sequence is exactly $k$. 
% This set of points is a tight example for \cref{thm:erdos szekeres}.

\begin{proof}[Proof of \cref{lem:even technique}]
	The construction exploits grid lines with an even number of segments. With the help of \cref{thm:erdos szekeres} we first construct a polyline that cuts through the segments. Then we use this polyline to define two sets of pairwise disjoint segments in $\Ss$, one of which has the desired size.
	
	Recall that meeting points are the points in which two parallel segments intersect. A meeting point on a grid line naturally partitions the segments lying on this line into two parts: those to the left of it and to the right of it (for horizontal lines), or those above it and below it (for vertical lines). Call a meeting point a \emph{candidate point} if both those parts have odd cardinalities. Note that thus, candidate points only occur on grid lines with an even number of segments. Further, in total there are $\seven/2$ candidate points. 
	
	By \cref{thm:erdos szekeres}, there exists either a non-increasing or a non-decreasing sequence of $\sqrt{\seven/2}$ candidate points.
	Suppose without loss of generality that the sequence is non-increasing and
    of maximum possible length. We call \emph{cutting points} the candidate points in the
    sequence and we use $C$  to denote the number of cutting points. Observe
    that $C \ge \sqrt{\seven/2}$.
	For every two consecutive cutting points, connect them with a segment. Then consider two half-lines with negative inclinations, one ending at the first cutting point and one starting at the last cutting point. This gives a polyline intersecting all vertical and horizontal grid lines. We call this path the \emph{cut}. %See \cref{fig:even-technique} for a visualization.
	\begin{figure}[ht]
	    \centering
		\includegraphics[scale=0.7]{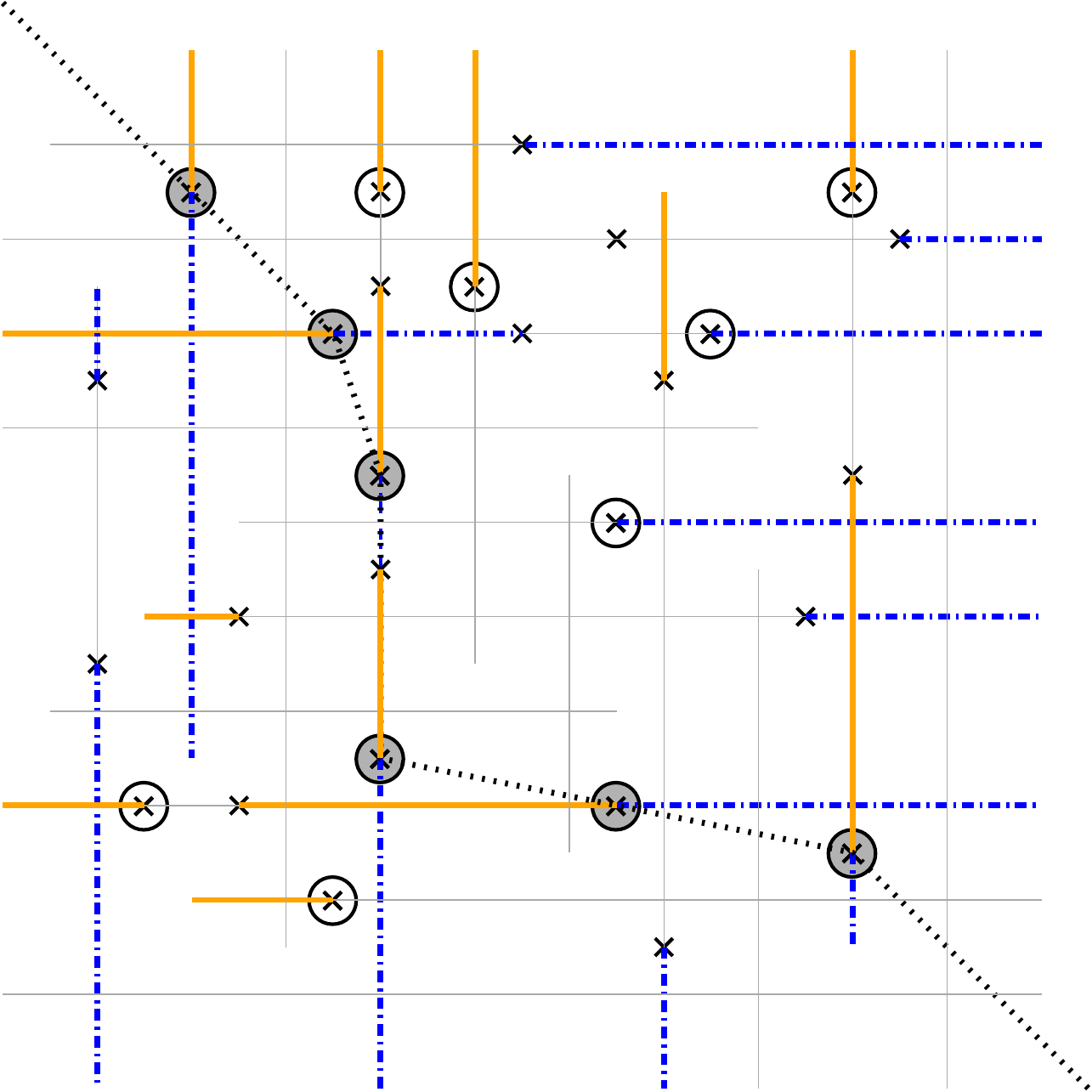}
		\caption{Selection of the two independent sets in the proof of \cref{lem:even technique}. Crosses are the meeting points, disks are the candidate points, and gray disks are the cut points. The black dotted line is the cut. Dashed blue and solid orange segments are those chosen to the sets $\Ss_\mathrm{blue}$ and $\Ss_\mathrm{orange}$, respectively.}
		\label{fig:even-technique}
	\end{figure}
	
	Using the cut, we construct two sets of segments $\Ss_\mathrm{blue}$ and $\Ss_\mathrm{orange}$; see \cref{fig:even-technique}. 
    
    \subparagraph*{Construction of $\Ss_\mathrm{blue}$:} The set $\Ss_\mathrm{blue}$ is constructed as follows.
    For each vertical grid line, start from the segment with the lowest endpoint
    and choose every second segment with the upper endpoint on the cut or
    below. Next, for each horizontal grid line, start from the segment with the
    right-most endpoint and choose every second segment with the left endpoint on the cut or to the right.
    
    \subparagraph*{Construction of $\Ss_\mathrm{orange}$:} The set $\Ss_\mathrm{orange}$ is symmetrically to $\Ss_\mathrm{blue}$. Namely, for each vertical grid
    line, start from the segment with the highest endpoint and choose every
    second segment with the lower endpoint on the cut or above.  For each
    horizontal grid line, start from the segment with the left-most endpoint and
    choose every second segment with the right endpoint on the cut or to
    the left.  
    
    If the sequence would be non-decreasing, the choice strategy for horizontal
    segments would be inverted between $\Ss_\mathrm{blue}$ and
    $\Ss_\mathrm{orange}$.
	
	We argue that the segments of $\Ss_\mathrm{blue}$ are pairwise disjoint. Note that the segments lying in the bottom-left side of the cut are vertical and pairwise disjoint by the construction, whole those lying in the top-right side of the cut are horizontal and pairwise disjoint. So it remains to argue that there is no pair of a vertical segment and a horizontal from $\Ss_\mathrm{blue}$ that would intersect at a point lying on the cut. Recall that since the representation is favorable, this intersection point would lie in the interiors of both segments. This would imply that either the vertical segment would have the top endpoint strictly above the cut, or the horizontal segment would have the left endpoint strictly to the left of the cut. This is a contradiction with the construction of $\Ss_\mathrm{blue}$. A symmetric argument shows that also the segments of $\Ss_\mathrm{orange}$ are pairwise~disjoint.
	
	It remains to show that $\Ss_\mathrm{blue} \cup \Ss_\mathrm{orange}$ has at least $\frac{n}{2} + \frac{\sqrt{2\seven}}{2} - \frac{\lodd}{2}$ segments.
	
	Consider a grid line with an even number of segments.
	For each candidate point on this line which is not a cutting point, exactly one segment containing this cutting point is in $\Ss_\mathrm{blue} \cup \Ss_\mathrm{orange}$. However, for each cutting point on this line, both segments meeting at this point are included in $\Ss_\mathrm{blue} \cup \Ss_\mathrm{orange}$, as there is an odd number of segments on either side. This means that on each such grid line, the total number of segments included in $\Ss_\mathrm{blue} \cup \Ss_\mathrm{orange}$ is exactly half of all the segments, plus one segment for each cutting point on the grid line.
	
	Consider now a grid line with an odd number of segments.
	The sets $\Ss_\mathrm{blue}$ and $\Ss_\mathrm{orange}$ contain every second segment starting from the outermost ones. Without the cut, this would include half of the segments lying on the line rounded up. Since there is an odd number of segments on the grid line, the cut crosses it only at one point. So at most one segment is removed from $\Ss_\mathrm{blue} \cup \Ss_\mathrm{orange}$ due to this. This means that among the segments lying on the line, at least half rounded down is included in $\Ss_\mathrm{blue} \cup \Ss_\mathrm{orange}$. This means we lose at most $1/2$ of a segment for each odd grid line.
	
	Together, this gives that $\Ss_\mathrm{blue} \cup \Ss_\mathrm{orange}$ contains at least
	\[\frac{\seven}{2} + C + \frac{\sodd}{2} - \frac{\lodd}{2} \ge \frac{n}{2} + \frac{\sqrt{2\seven}}{2} - \frac{\lodd}{2}\]
	segments.  By choosing the larger of the two sets, we obtain an independent set of the desired size.
\end{proof}

\section{The upper bound: proof of \cref{thm:upper_bound}} \label{sec:upper_bound}
In this section, we construct families of axis-parallel segments whose intersection graphs satisfying the requirements of \cref{thm:upper_bound}. In fact, we prove the following stronger statement.

%The construction of this family  was independently discovered but has important similarities to the family of sets of axis-parallel rectangles
%defined by Jel\'inek \cite{2015_Correa} \footnote{The contribution of V\'it Jel\'inek is clarified in the \textit{Acknowledgement} of the cited paper.}.

% I don't know either if this is necessary or if it is clear...perhaps it is best to remove it.

\begin{theorem}[Full version of Theorem~\ref{thm:upper_bound}]\label{thm:ub-full}
    For any integer $k \geq 1$, there exists a graph $G_k$ in $\graphs$ on
    $4k^2$ vertices with clique covering number $\theta(G_k)=2k^2$, fractional independence number $\alpha^\star(G_k)=2k^2$, and
    independence number \[\alpha(G_k) = k^2 + 3k - 2.\]
\end{theorem}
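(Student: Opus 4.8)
The plan is to exhibit a single explicit family $(G_k)_{k\ge 1}$, given by a concrete arrangement of $4k^2$ axis-parallel segments on a grid (so $n=4k^2$), and to verify the four quantities separately; three of them are short, and the exact value of $\alpha(G_k)$ is where the real work lies. I would design the representation so that (i) no three segments meet at a point, so that $G_k\in\graphs$ and, recalling that every graph in $\graphs$ is triangle-free, $G_k$ is triangle-free; and (ii) every grid line carries an \emph{even} number of segments. Property (ii) is the engine behind the two ``easy'' parameters: consecutive segments on a common grid line overlap and are therefore adjacent, so pairing them up as $(s_1,s_2),(s_3,s_4),\dots$ along each line yields a perfect matching of $G_k$ into $2k^2$ edges.

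For the clique covering number, note first that since $G_k$ is triangle-free every clique has at most two vertices, so any clique cover uses at least $n/2=2k^2$ cliques; the perfect matching above is a clique cover of exactly this size, giving $\theta(G_k)=2k^2$. For the fractional parameters, assigning $x_v=\tfrac12$ to every vertex is feasible (each edge constraint reads $\tfrac12+\tfrac12\le 1$) and has value $n/2=2k^2$, so $\alpha^\star(G_k)\ge 2k^2$; combining this with the triangle-free duality $\alpha^\star=\theta^\star\le\theta=2k^2$ recorded in the introduction forces $\alpha^\star(G_k)=\theta^\star(G_k)=2k^2$. Thus \cref{cor:ratio} and \cref{cor:LP} follow once $\alpha(G_k)=k^2+3k-2$ is established, since $2k^2/(k^2+3k-2)\to 2$.

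It remains to pin down $\alpha(G_k)$ exactly, and I would build an induction on $k$ into the construction: arrange the segments as nested ``frames'', so that $G_{k-1}$ sits inside $G_k$ as the induced subgraph obtained by deleting the outermost ring of $8k-4$ segments. The target values satisfy the clean recurrence $\alpha(G_k)-\alpha(G_{k-1})=2k+2$ (with base case $G_1=C_4$, $\alpha=2$), which dictates how the outer frame should interact with the interior. The lower bound is the constructive half: I would write down one explicit independent set — roughly every second segment along each line, corrected along the boundary — and check it has size $k^2+3k-2$, most cleanly by verifying that the outer frame contributes $2k+2$ new pairwise-disjoint segments on top of an optimal inner solution.

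The upper bound $\alpha(G_k)\le k^2+3k-2$ is the main obstacle, and it is genuinely integral: since $\alpha(G_k)<2k^2=\alpha^\star(G_k)$ — this is exactly the gap we are exhibiting — no LP or fractional-covering certificate can prove it. Moreover the cheap combinatorial bound coming from the perfect matching, namely that each of the $2k^2$ matching edges meets an independent set $I$ in at most one vertex, only yields $\alpha(G_k)\le 2k^2$, a factor of roughly two too weak. I would therefore prove the inductive step geometrically: given an independent set $I$ in $G_k$, its restriction to the interior is independent in $G_{k-1}$, hence of size at most $\alpha(G_{k-1})$ by induction, and the crux is to show that the outermost frame, together with its crossings into the interior, can host at most $2k+2$ further segments of $I$. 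Making this interface bound tight — accounting for the segments that the odd-parity crossings between frame and interior force $I$ to give up, and ruling out any gain from sacrificing interior segments to grab more than $2k+2$ on the frame — is the delicate point, and I expect it to require a charging argument that pins each deficit onto a meeting point of the frame, dual to the way candidate points and the Erd\H{o}s--Szekeres cut drove the lower-bound analysis.
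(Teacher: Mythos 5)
Your treatment of the three easy parameters is correct and matches the paper: pairing the two segments on each line gives a perfect matching, hence a clique cover of size $2k^2$; triangle-freeness gives $\theta(G_k)\ge 2k^2$; half-weights give $\alpha^\star(G_k)\ge 2k^2$; and LP duality closes the chain. The genuine gap is the exact value $\alpha(G_k)=k^2+3k-2$, which is the entire content of the theorem, and there your proposal stops at a plan with two unfilled holes. First, you never exhibit the construction. ``Nested frames'' is left unspecified, and the one construction known to realize these parameters --- the paper's $\Mm_k$, consisting of $k$ disjoint $k$-boxes placed along a diagonal --- is \emph{not} nested: deleting an outer ring of $\Mm_k$ does not leave $\Mm_{k-1}$ (the surviving boxes are still $k$-boxes, with $2k$ lines each, not $(k-1)$-boxes), so the induced-subgraph induction $G_{k-1}\subseteq G_k$ that your recurrence needs is simply unavailable for it. You would be inventing a new family and would owe a verification of all four quantities for it, starting with the base case that your $G_1$ is really $C_4$; the arithmetic check that $k^2+3k-2$ satisfies $\alpha(G_k)-\alpha(G_{k-1})=2k+2$ is a consistency test, not evidence that such a family exists.

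Second --- and you flag this yourself --- the inductive upper bound cannot be the sum of two independent bounds. A ring of $8k-4$ segments will in general contain on the order of $4k-2$ pairwise disjoint segments (every second segment around the ring), so the claim that the frame hosts at most $2k+2$ members of $I$ is false in isolation for $k\ge 3$; it can only hold \emph{net} of the interior segments that choosing many frame segments forces $I$ to forfeit. That exchange/charging argument, which you defer with ``I expect it to require a charging argument,'' is precisely the hard part of the theorem, and nothing in the proposal indicates how to carry it out. The paper avoids induction and any trade-off analysis altogether: it bounds each box's contribution locally, showing that a \emph{boring} box meets $\Ii$ in at most $k+1$ segments via an explicit partition of the up-and-right (resp.\ down-and-left) segments into $k+1$ cliques (\cref{lem:boring}), and showing globally that at most two boxes can be \emph{interesting}, since an up segment of one box would cross a left segment of any second box of the same kind (\cref{lem:interesting}); summing $2\cdot 2k+(k-2)(k+1)$ over the $k$ boxes gives $k^2+3k-2$ exactly, and the matching lower-bound set is written down box by box. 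Until you supply both the explicit nested construction and the tight interface bound, your argument establishes only $\alpha(G_k)\le 2k^2$.
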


Note that \Cref{cor:ratio,cor:LP} follow from \cref{thm:ub-full} by considering $G=G_k$ for $k$ large enough depending on $1/\eps$.
The remainder of this section is devoted to the proof of \cref{thm:ub-full}.

Fix an integer $k\geq 1$. We construct a set of $4k^2$ axis-parallel segments $\Mm_k$. The set $\Mm_k$ will consist of $k$ sets with $4k$ segments each; these sets will be called {\em{$k$-boxes}}.
A \emph{$k$-box} is a set of $4k$ axis-parallel segments distributed on $k$ horizontal and $k$ vertical lines, each with exactly two segments on it. For every line, the two segments on this line intersect at a single point, which we call their \emph{meeting point}.
In the construction of a $k$-box, the meeting points are arranged in a diagonal from the top left to the bottom right, see the case $k=6$ in \cref{fig:k_box}.
The \emph{up segments} (resp. \emph{down segments}) of a $k$-box are the segments lying vertically above (resp. below) a meeting point. Similarly, we define the \emph{left} and \emph{right segments} of a $k$-box.

\begin{figure}[ht]
\centering
	\includegraphics[scale =0.6]{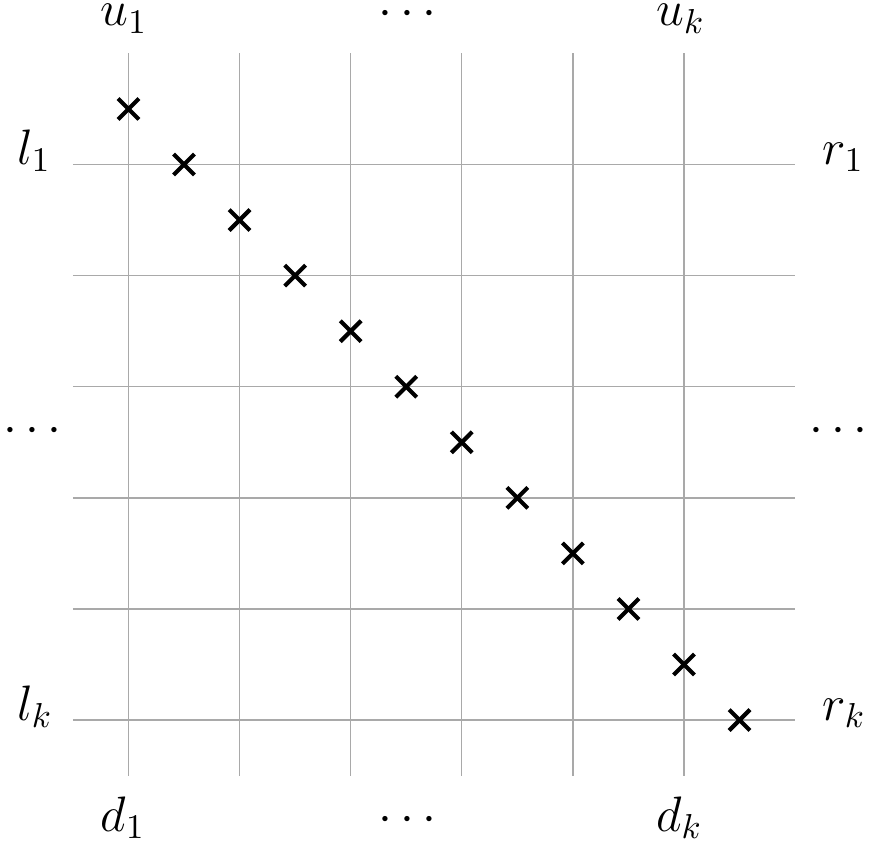}
	\caption{A 6-box. The meeting points are represented with crosses. Thus, every line contains two segments of the box, whose only intersection is the meeting point on this line.}
	\label{fig:k_box}
\end{figure}

To construct $\Mm_k$, consider a large square and place $k$ different $k$-boxes $\{\Bb_i\}_{i=1}^k$ along its diagonal from the bottom left to the top right. Then, prolong each segment away from the meeting point until it touches a side of the square, see \cref{fig:Michal_family}. The construction results in the set $\Mm_k$ consisting of $4k^2$ segments. We note that $\Mm_k$ is a favorable representation of its intersection graph in the sense introduced in \cref{sec:lower_bound}. Also, perhaps not surprisingly, the construction is inspired by a tight example for the Erd\H{o}s-Szekeres Theorem (\cref{thm:erdos szekeres}), so that it proves tightness of the bound provided by \cref{lem:even technique}. 

%%%
%\jana{Do we also want do indicate the maximal independent set? Even more unsure about that one...}
% I think that if we define the family and we give the independent set at the same time is too much to take
%%%

\begin{figure}[ht]
\centering
	\includegraphics[scale =0.6]{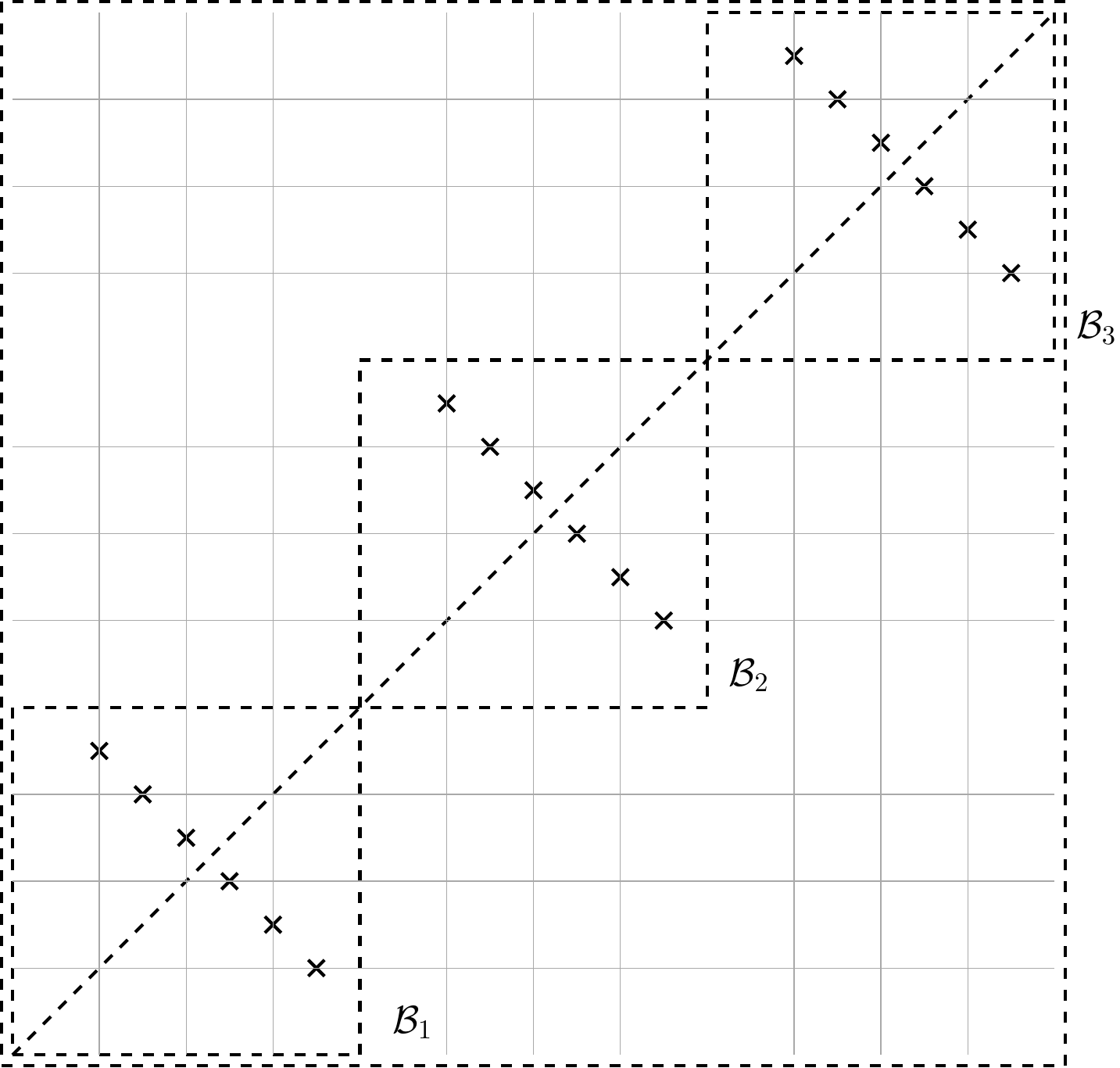}
	\caption{The set $\Mm_3$. The meeting points are represented by crosses. The dashed lines indicate the sides of the large square and of the $k$-boxes.}
	\label{fig:Michal_family}
\end{figure}

We are left with verifying the asserted properties of $\Mm_k$.
First, we introduce some notation and definitions.
Let $\Ii$ be a set of pairwise disjoint segments in $\Mm_k$.
A $k$-box $\Bb_i$ of $\Mm_k$ is said to be \emph{interesting} for $\Ii$ if $\Bb_i \cap \Ii$ contains  either at least one down segment and one right segment, or at least one up segment and one left segment. Otherwise, the $k$-box is \emph{boring} for $\Ii$.
Distinguishing between interesting and boring boxes allows more precise estimates on the maximum possible cardinality of $\Ii$.

In the next two lemmas, we consider $\Ii$ to be a set of pairwise disjoint segments in $\Mm_k$.
\begin{lemma}\label{lem:boring}
For any $k$-box $\Bb$ in $\Mm_k$,  $|\Bb \cap \Ii| \leq 2k$.
Moreover, if $\Bb$ is boring for $\Ii$, then $|\Bb \cap \Ii| \leq k+1$.
\end{lemma}

\begin{proof}
The first statement holds because $\Ii$ contains at most one segment per line, and there are $2k$ lines in a box: $k$ vertical and $k$ horizontal.

Assume now that $\Bb$ is a box that is boring for $\Ii$. Enumerate the up and down segments of $\Bb$ from left to right as $U = \{u_1, \ldots, u_k\}$ and $D = \{d_1, \ldots, d_k\}$, and the right and left segments from top to bottom as $R =\{r_1, \ldots, r_k\}$ and $L =\{l_1, \ldots, l_k\}$; see \cref{fig:k_box}.
If all segments of $\Bb\cap \Ii$ are pairwise parallel (that is, they are either all vertical or all horizontal), then $|\Bb \cap \Ii| \leq k$ since $\Ii$ can contain only one segment per line.
Then, there are two cases left to check: either $\Bb$ contains only up and right segments, or only down and left segments.
Observe that $U \cup R$ can be partitioned into $k+1$ parts as follows: $u_1$ and $r_k$ are in singleton parts, and we have $k-1$ pairs of intersecting segments $ \{u_{i+1},r_i\}_{i=1}^{k-1}$. Similarly,  $D \cup L$ can be  partitioned into $k$ pairs of intersecting segments $ \{d_i,l_i\}_{i=1}^k$.
The independent set $\Ii$ can contain at most one segment from each part of these partitions. Hence, $|\Bb \cap \Ii| \leq k+1$ in both cases.
\end{proof}

\begin{lemma}\label{lem:interesting}
There are at most two boxes that are interesting for $\Ii$.
\end{lemma}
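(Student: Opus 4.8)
The plan is to exploit the one structural feature that the construction forces on us: the $k$-boxes $\Bb_1,\ldots,\Bb_k$ are placed strictly along the main diagonal, so that for $i<j$ the entire bounding region of $\Bb_i$ lies below and to the left of that of $\Bb_j$. First I would fix coordinates, writing the enclosing square as $[0,N]^2$ and recording that the bounding box of $\Bb_i$ is some $[x_i^-,x_i^+]\times[y_i^-,y_i^+]$ with $x_i^+<x_j^-$ and $y_i^+<y_j^-$ whenever $i<j$. The heart of the argument is then two geometric claims describing how the prolonged segments of different boxes are forced to cross.

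The first claim is: for $i<j$, every right segment of $\Bb_i$ intersects every down segment of $\Bb_j$. To see this I would note that a right segment of $\Bb_i$ is horizontal, sits at a height $y_R\in[y_i^-,y_i^+]$, and after prolongation covers every $x$-coordinate from some $\mu\in[x_i^-,x_i^+]$ up to $N$; a down segment of $\Bb_j$ is vertical, sits at an $x$-coordinate $x_D\in[x_j^-,x_j^+]$, and covers every $y$ from $0$ up to some $\nu\in[y_j^-,y_j^+]$. Their would-be crossing point $(x_D,y_R)$ lies on both segments precisely because the diagonal ordering supplies the two inequalities $x_D\ge x_j^->x_i^+\ge\mu$ and $y_R\le y_i^+<y_j^-\le\nu$. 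The second claim is the mirror image: for $i<j$, every up segment of $\Bb_i$ intersects every left segment of $\Bb_j$. I would obtain this for free from the first claim by reflecting the whole configuration across the line $y=x$, which maps $\Mm_k$ to itself, swaps up with right and down with left, and preserves the order of the boxes.

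With these two claims in hand the combinatorial step is short. Since $\Ii$ consists of pairwise disjoint segments, the claims become two forbidden patterns: for $i<j$, $\Ii$ cannot contain both a right segment of $\Bb_i$ and a down segment of $\Bb_j$, and cannot contain both an up segment of $\Bb_i$ and a left segment of $\Bb_j$. Suppose for contradiction that three boxes $\Bb_a,\Bb_b,\Bb_c$ with $a<b<c$ are all interesting. By definition $\Bb_a$ contains either a right segment (witnessing the down-and-right option) or an up segment (witnessing the up-and-left option). In the first case the first forbidden pattern forbids a down segment in every later box, so $\Bb_b$ and $\Bb_c$ can be interesting only via up-and-left; but then $\Bb_b$ owns an up segment and $\Bb_c$ a left segment with $b<c$, contradicting the second forbidden pattern. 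The second case is symmetric. Either way we reach a contradiction, so at most two boxes are interesting.

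I expect the main obstacle to be the geometry of the two intersection claims rather than the counting. One has to use the full strength of the diagonal placement — strict separation in \emph{both} coordinates, not merely "along the diagonal" — and to use that a prolonged right (resp.\ down) segment genuinely reaches $x=N$ (resp.\ $y=0$), since that reach is exactly what produces the crossing. By contrast the counting argument is robust: the case split on whether $\Bb_a$ owns a right segment or an up segment automatically absorbs boxes that happen to contain all four segment types, so no separate treatment of such degenerate interesting boxes is needed.
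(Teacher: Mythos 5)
Your proposal is correct and follows essentially the same route as the paper: both rest on the two crossing facts that, for boxes ordered along the diagonal, a right segment of an earlier box meets every down segment of a later box and an up segment of an earlier box meets every left segment of a later box, so $\Ii$ forbids these patterns. The paper packages the count slightly more directly---at most one interesting box of the ``up-and-left'' kind and at most one of the ``down-and-right'' kind---whereas you exclude three interesting boxes via a case analysis (and you spell out the coordinate geometry the paper leaves implicit), but the substance is identical.
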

\begin{proof}
We show that there is at most one interesting box with at least one up and one left segment included in $\Ii$. Then a symmetric argument shows that there is at most one interesting box with at least one down and one right segment included in $\Ii$, implying that there are at most two interesting boxes in total.

For the sake of contradiction, assume $\Mm_k$ there are two distinct interesting boxes $\Bb, \Bb'$ of the first kind.
Then, either an up segment of $\Bb \cap \Ii$ intersects a left segment of $\Bb' \cap \Ii$, or vice-versa.
This contradicts the fact that segments of $\Ii$ are pairwise disjoint.
\end{proof}

With the lemmas in place, we are in position to finish the proof of \cref{thm:ub-full}.
Let $G_k$ be the intersection graph of $\Mm_k$. By construction, the set $\Mm_k$ consists of $4k^2$ axis-parallel segments and $G_k$ is in $\graphs$.

First, we compute the clique covering number and the fractional independence number of $G_k$.
Observe that $G_k$ is triangle-free, hence every clique in $G_k$ is of size at most $2$. It follows that every clique covering of $G_k$ is of size at least $\frac{|\Mm_k|}{2}=2k^2$, that is, $\theta(G_k)\leq 2k^2$. On the other hand, taking every vertex of $G_k$ with multiplicity $1/2$ gives a fractional independent set of size $\frac{|\Mm_k|}{2}=2k^2$, implying that $\alpha^\star(G_k)\geq 2k^2$. Since $\theta(H)\geq \alpha^\star(H)$ for every triangle-free graph $H$, we conclude that 
$$\theta(G_k)=\alpha^\star(G_k)=2k^2.$$

It remains to prove that $\alpha(G_{k})= k^2 + 3k - 2$.
We give a set of pairwise disjoint segments in $\Mm_k$, corresponding to an independent set in $G_k$. This shows that $\alpha(G_k) \geq k^2 + 3k - 2$.
The set of segments in $\Mm_k$ consists of:
(i) the left and up segments of $\Bb_1$, and (ii) the right and down segments of
$\Bb_2$, and (iii) the right segments and the topmost up segment of $B_i$, for each $3 \leq i\leq k$.
This is a set of pairwise disjoint segments in $\Mm_k$ and it contains $2\cdot (2k) + (k-2)(k+1) = k^2 + 3k - 2$ segments.

To show that $\alpha(G_k) \leq k^2 + 3k - 2$ we apply \cref{lem:boring} and \cref{lem:interesting} to obtain, for any set $\Ii$ of pairwise disjoint segments in $\Mm_k$, that
$$ | \Ii | = |\Mm_k \cap \Ii | = \sum_{i=1}^k |\Bb_i \cap \Ii| \leq 2\cdot (2k) + (k-2)(k+1) = k^2 + 3k -2.$$
This concludes the proof of \cref{thm:ub-full}.

\paragraph*{Acknowledgements.} The results presented in this paper were obtained during the trimester on Discrete Optimization at Hausdorff Research Institute for Mathematics (HIM) in Bonn, Germany. We are thankful for the possibility of working in the stimulating and creative research environment at HIM.

\bibliographystyle{abbrv}
\bibliography{bib}

%\begin{appendices}

%\end{appendices}

\end{document}